\newcommand{\TheTitle}{Computation of real-valued basis functions which transform as irreducible representations of the polyhedral groups} 
\newcommand{\TheTitles}{Real basis functions for the polyhedral groups} 
\newcommand{\TheAuthors}{Nan Xu and Peter C. Doerschuk}
\headers{\TheTitles}{\TheAuthors}
\title{{\TheTitle}}
\author{
	Nan Xu\thanks{Wallace H. Coulter Department of Biomedical Engineering, Georgia
		Institute of Technology and Emory University, Atlanta, Georgia 30332, USA;
	School of Electrical and Computer Engineering, Cornell University, Ithaca, NY, USA;
		 ID (\email{im.nan.xu@gmail.edu}).}
	\and
	Peter C. Doerschuk\thanks{School of Electrical and Computer Engineering, Nancy E. and Peter C. Meinig School of Biomedical Engineering, Cornell University, Ithaca, NY, USA, ID (\email{pd83@cornell.edu}).
	}
}
\newcommand{\reals}{{\mathrm{I\kern-.2em R}}}
\newcommand{\complex}{{\mathrm{C\kern-.6em C}}}
\newcommand{\field}{{\mathrm{I\kern-.2em F}}}
\newcommand{\expectation}{{\mathrm{I\kern-.2em E}}}
\newcommand{\dd }{{\rm d}}
\newcommand{\tr}{{\rm tr}}
\newcommand{\calC}{{\cal C}}
\newcommand{\calD}{{\cal D}}
\newcommand{\calH}{{\cal H}}
\newcommand{\calP}{{\cal P}}
\newcommand{\jp}{j^\prime}
\newcommand{\lp}{l^\prime}
\newcommand{\m}{m^\prime}
\newcommand{\np}{n^\prime}
\newcommand{\pp}{p^\prime}
\newcommand{\vx }{{\bf x}}
\newcommand{\basispln}{F_{p,l,n}}
\newcommand{\Nirrep}{N_{\rm rep}}
\newcommand{\Ngroup}{N_g}
\newcommand{\Npl}{N_{p;l}}
\newcommand{\indicator}{\chi}
\DeclareMathOperator{\diag}{diag}
\begin{document}
\maketitle
\begin{abstract}
 Basis functions which are invariant under the operations of a rotational point group $G$ are able to describe any 3-D object which exhibits the rotational point group symmetry. However, in order to characterize the spatial statistics of an ensemble of objects in which each object is different but the statistics exhibit the symmetry, a complete set of basis functions is required. In particular, for each irreducible representation (irrep) of $G$, it is necessary to include basis functions that transform according to that irrep. This complete set of basis functions is a basis for square-integrable
 functions on the surface of the sphere in 3-D. Because the objects are real-valued, it is convenient to have real-valued basis functions. In this paper the existence of such real-valued bases is proven and an algorithm for their computation is provided for the icosahedral $I$ and the octahedral $O$ symmetries. Furthermore, it is proven that such a real-valued basis does not exist for the tetrahedral $T$ symmetry because some irreps of $T$ are essentially complex. The importance of these basis functions to computations in single-particle cryo electron microscopy is described.
\end{abstract}

\begin{keywords}
	polyhedral groups,
	real-valued matrix irreducible representations,
	numerical computation of similarity transformations,
	basis functions that transform as a row of an irreducible representation of
	a rotation group,
	Classical groups (02.20.Hj),
	Special functions (02.30.Gp).
\end{keywords}

\begin{AMS}
  2604, 2004, 57S17, 57S25
\end{AMS}

\section{Introduction}
\label{sec:intro}
A finite group of rotational symmetries of a 3-D object \textcolor{black}{(i.e., a geometry object in the 3D Cartesian space such as a platonic solid)}, denoted by $G$, arises in several situations including quasi-crystals~\cite{suck2002quasicrystals}, fullerenes~\cite{schwerdtfeger2015topology}, and viruses~\cite{horne1961symmetry}. By definition, a finite group is a set of finite elements equipped with a binary operation that combines any two elements to form a third element in such a way that four conditions, namely closure, associativity, identity and invertibility, are satisfied. Group elements $g\in G$ can be represented by matrices, namely representation matrices. Specifically, a group representation is an invertible linear transformation from the group elements to a set of representation matrices so that the group operation can be represented by matrix multiplication. An irreducible representation (irrep) of a group is a group representation that cannot be further decomposed into nontrivial invariant subspaces. The trivial irrep or identity irrep has all representation matrices to be identity (e.g., ``1''). An unitary irrep has all irrep matrices to be unitary. Two irreps are inequivalent if it is impossible to find a similarity transform relating them. More terminology definitions can be found in \cite{johnsonrepresentationterm}.
\par
One method for representing a 3-D object such as a quasi-crystal, a fullerene or a virus particle is an orthonormal expansion in basis functions where each basis function is a basis vector that associates the group operation with its matrix representation~\cite[Section 5.1]{Cornwell1984}. \textcolor{black}{Specifically, the electron scattering intensity of the 3-D object at coordinate $\vx\in\mathbb{R}^3$, denoted by $\rho(\vx)$, can be expressed by a Fourier series $\phi_{p,j}(\vx)$ with coefficients $w_{p,j}$ which are random variables, i.e.,
\begin{equation}\label{eq:ScatteringIntensity}
	\rho(\vx)=\sum_p\sum_j w_{p,j} \phi_{p,j}(\vx),
\end{equation}
where $p$ indexes the $p$th irrep.} If the object is invariant under the operations of $G$, then each basis function should transform according to the identity irrep of $G$ \textcolor{black}{(i.e., $p=1$ in Eq.~\ref{eq:ScatteringIntensity})} and such basis functions, called ``invariant basis'', have been extensively
studied~\cite{FoxOzierTetrahedralHarmonics1970,MuggliCubicHarmonics1972,AltmannPCambPhilSoc1957,MeyerCandJMath1954,CohanPCambPhilSoc1958,ElcoroPerezMatoMadariaga1994,HeuserHofmannZNaturforsch1985,JackHarrison1975,KaraKurkiSuonioActaCryst1981,LaporteZNaturforschg1948,ConteRaynalSoulie1984,RaynalConte1984,ZhengDoerschukActaCryst1995,FernandoJComputPhys1994}.
In more complicated scenarios, the object is not invariant under the operations of $G$. In particular, to describe any random 3-D object, a complete set of basis spanning the $L^2$ space is required, whereas such basis can be constructed by the basis functions that transform according to each of the irreps of $G$ (``all irreps basis'')~\cite[p.65, Theorem 1]{cornwell1997group}. Such basis functions have also been
studied~\cite{CohanPCambPhilSoc1958,MuggliCubicHarmonics1972,ConteRaynalSoulie1984,RaynalConte1984,BellonDodecahedralCosmologyClassicalQuantumGravity2006}. Our motivating problem, a structural biology problem described in \textcolor{black}{ section~\ref{sec:motivation}},
is an example of the more complicated situation. 
\par
\textcolor{black}{Note that the Fourier series $\phi_{p,j}(\vx)$ can be a product of an angular basis function $F_{p,j}(\vx/x)$ and a radial basis function $h_{p,j}(x)$, i.e., $\phi_{p,j}(\vx)=F_{p,j}(\vx/x)h_{p,j}(x)$~\cite[Appendex A.1]{xu2018allosteric}. One natural choice for the radial basis
functions $h_{p,j}(x)$ is exactly the family of Spherical Bessel functions~\cite{ZhengDoerschukIP1996}, which form a complete orthonormal set on $\mathbb{R}^+\cup\{0\}$. Then, computing a set of desired angular basis functions $F_{p,j}(\cdot)$ is the focus of this paper. In the remainder of this paper, the word ``basis functions'' (or ``basis'') all refer to the angular basis functions (or the angular basis).} In this paper, we provide a practical computational algorithm for a set of basis functions 
with the following four properties (which are specified in Eqs, \ref{eq:IfromY}-\ref{eq:PFprop}, respectively):
\begin{enumerate}
	\item
	Each function in the basis is a linear combination of spherical harmonics\footnote{Throughout this paper, spherical harmonics are denoted by $Y_{l,m}(\theta,\phi)$ where the degree $l$ satisfies $l\in\{0,1,\dots\}$, the order $m$ satisfies $m\in\{-l,\dots,l\}$ and $(\theta,\phi)$ are the angles of spherical coordinates with $0\leq \theta\leq \pi$ and $0\leq \phi\leq 2\pi$~\cite[Section~14.30, pp.~378--379]{OlverLozierBoisvertClark2010PATCH}.} of a fixed degree $l$.
	\item
	Each function in the basis is real-valued.
	\item
	The basis functions are orthonormal.
	\item
	\label{item:transformsasthenthrow}
	Under the rotations of a finite symmetry group, each function in the basis
	transforms as one row of the corresponding unitary irreducible representation (irrep)
	matrices.
\end{enumerate}
\par
Motivated by the study in structural virology, in which viruses often
exhibit the symmetry of a Platonic
solid~\cite{CrickWatsonStructureSmallVirusesNature1956}, we are especially
interested in the three polyhedral groups--including 1) the tetrahedral group $T$ that contains the 12 rotational symmetries of a regular tetrahedron, 
2) the octahedral group $O$ that contains the 24 rotational symmetries of a cube and a regular octahedron, and 3) the icosahedral group $I$ that contains the 60 rotational symmetries of a regular dodecahedron and a regular icosahedron.
For reasons that are described in Section~\ref{sec:computebasisfunctions}, in the cases of the octahedral and icosahedral groups, it is possible to find a set of basis functions which is complete in the space of
square-integrable functions on the surface of the sphere and which
satisfies Properties~1--4.
However, in the case of the tetrahedral group, it is not possible to find a set of basis functions that is both complete and which satisfies Properties~1--4. In such a situation, one way to achieve completeness is to add additional complex-valued functions which is an undesirable situation for our structural biology application (Section \ref{sec:motivation}).
\par
In the majority of existing literature, basis functions of a symmetry group have been generated as a linear combination of spherical harmonics of a single degree~\cite{AltmannPCambPhilSoc1957,AltmannBradleyHexagonal1965,AltmannCracknellCubicGroups1965,MuellerPriestleyCubicPhysRev1966,PuffCubicPhysicaStatusSolidiB1970,MuggliCubicHarmonics1972,FoxOzierTetrahedralHarmonics1970,ZhengDoerschukActaCryst1995,ZhengDoerschukTln1997}, because of the importance of rotations and the relative simplicity of
rotating spherical harmonics. Spherical harmonics have been widely applied in structural biology, e.g.,
the fast rotation function~\cite{CrowtherFastRotation1972}. Other work express the basis functions of a polyhedral group as multipole expansions in rectangular
coordinates~\cite{JahnMethaneSpectrumRoyalSocA1938,HechtTetrahedralJMolSpectroscopy1961}.
Previous work uses a variety of techniques and often has a restriction on
the value $l$ of the spherical
harmonics~\cite{AltmannPCambPhilSoc1957,AltmannBradleyHexagonal1965,AltmannCracknellCubicGroups1965,MuellerPriestleyCubicPhysRev1966,PuffCubicPhysicaStatusSolidiB1970,CohanPCambPhilSoc1958,MuggliCubicHarmonics1972,ConteRaynalSoulie1984,RaynalConte1984}. For instance,
Refs.~\cite{AltmannPCambPhilSoc1957,AltmannBradleyHexagonal1965,AltmannCracknellCubicGroups1965}
consider a range of point groups and use the techniques of projection
operators and Wigner $D$ transformations to compute basis functions up to
degree $l=12$, while Ref.~\cite{CohanPCambPhilSoc1958} uses similar
techniques restricted to the icosahedral group to provide basis functions
up to degree $l=15$.
Refs.~\cite{MuellerPriestleyCubicPhysRev1966,PuffCubicPhysicaStatusSolidiB1970}
use the method of representation transformation to compute the invariant
basis functions of the cubic group up to degree $l=30$; the work of
Ref.~\cite{MuggliCubicHarmonics1972} extends this computation to all irreps
basis functions. 
Refs. \cite{ConteRaynalSoulie1984,RaynalConte1984} propose a method for
deriving all irreps basis functions of the cubic and the icosahedral groups
for a specific degree $l$.
However, for computation which needs all irreps basis functions for a large
range of $l$ values (e.g., from 0 to 55), the one-by-one derivation is
cumbersome.
Later work~\cite{FoxOzierTetrahedralHarmonics1970,ZhengDoerschukActaCryst1995,ZhengDoerschukTln1997}
release this restriction on the degree $l$ and allow for the computation of
the invariant basis functions of any polyhedral group.
However, the recursions
in~\cite{ZhengDoerschukActaCryst1995,ZhengDoerschukTln1997} appear to be
unstable in computational experiments.
The cosmic topology implications of the Wilkinson Microwave Anisotropy
Probe observations have been
analyzed~\cite{CaillerieLanchiezeReyLuminetLihoucqRiazueloWeeksAstronAstrophys2007,BellonDodecahedralCosmologyClassicalQuantumGravity2006}
using functions similar to those of this paper for the icosahedral group.
The approach of
Ref.~\cite{BellonDodecahedralCosmologyClassicalQuantumGravity2006} is to
start with the invariant polynomials due to Felix Klein~\cite{Klein1884} 
and construct the desired functions by algebraic and differential operations on
polynomials.
In contrast, we start with known irreps, typically unitary, and use
linear algebra to compute real-valued unitary irreps. We then determine the desired functions from the real-valued generalized projection operators that are constructed from the real-valued irreps. Because of our application, we are focused on real-valued functions and we
show that such functions are only possible when there exists a real-valued irrep (Section~\ref{sec:realonly}, Lemma~\ref{prop:needrealirreps}).
Among the variety of symmetries that arise in our biophysical application,
essentially all rotational point group symmetries, there exist symmetries
that do not allow real-valued irreps and our approach makes this clear.
We also provide efficient software implementation of the proposed algorithm.
\par
In this paper, we derive an algorithm for efficiently computing the real-valued all irreps basis functions for the tetrahedral, octahedral, and icosahedral groups for arbitrary value of $l$. \textcolor{black}{The algorithm takes advantage of the exact solution calculated by \emph{Mathematica}~\cite{MathematicaURL} build-in functions (e.g., \texttt{WignerD}). Given these \emph{Mathematica}~\cite{MathematicaURL} build-in functions, our proposed method does not use any recurrence to compute the basis. This is to be contrasted with earlier work \cite{ZhengDoerschukActaCryst1995,ZhengDoerschukTln1997}, which computed basis functions by explicit recurrence relations that led to unstable results.} The most
burdensome calculation in the algorithm is to determine the eigenvectors of
a real symmetric matrix that is of dimension $2 d_p$ where $d_p$ is the
dimension of the $p$th irrep which, for the groups we consider, is no
larger than 5.
\par
To obtain the basis functions satisfying Properties~1--4, we first demonstrate that such basis functions exist if and only if real-valued irrep matrices exist (Section~\ref{sec:realonly}). 
Next, we determine the required real-valued irrep matrices
.
Standard approaches exist, e.g., Young
diagrams~\cite{FultonYoungTableaux1997}.
However, taking advantage of existing complex-valued irrep
matrices~\cite{AroyoKirovCapillasPerezMatoWondratschekBILBAOActaCryst2006,LiuPingChenJMathPhys1990},
we derive formula to find a similarity matrix that transforms the
complex-valued irrep matrices that are potentially-real (meaning that such a complex to real similarity matrix transformation exists) to real-valued irrep matrices (Section~\ref{sec:computingrealirrepmatrices}). Then, following the procedures as described in~\cite[p.~93]{Cornwell1984}, we determine real-valued generalized projection operators using the real-valued irrep matrices (obtained from Section~\ref{sec:computingrealirrepmatrices}), and apply them to real-valued spherical harmonics to obtain the desired basis functions (Section~\ref{sec:computebasisfunctions}). Finally, we provide numerical examples for the three polyhedral groups (Section~\ref{sec:application}).

\section{\textcolor{black}{Motivation and contribution}}\label{sec:motivation}
\par
The motivation for studying these functions is to characterize the 3-D
heterogeneity of a nanometer-scale biological particle (virus, ribosome, {\em etc.}) based on single-particle cryo electron microscopy (cryo EM)~\cite{SubramaniamKuhlbrandtHendersonRecentAdvancesInCryoEMIUCrJ2016,ChemistryNobelPrize2017}.
Single-particle cryo electron microscopy (cryo EM)~\cite{BaiMcMullanScheresTrendsBiochemicalSci2015,ChengGrigorieffPenczekWalzCell2015,ChengCell2015}
provides essentially a
noisy 2-D projection in an unknown direction of the 3-D electron scattering
intensity of a $10^1$--$10^2$~nm biological object. For studies with high spatial resolution, only one image is taken of each instance of the object because the electron beam rapidly damages the
object. There are multiple software systems, e.g.,
Refs.~\cite{FrankRadermacherPenczekZhuLiLadjadjLeithJStructBio1996,LudtkeBaldwinChiuJStructBio1999,ScheresRELIONimplementationJMB2012}, for computing a 3-D reconstruction of the object from sets of images of
different instances of the object and these systems include the possibility
that the instances come from a small set of classes where all instances
within one class are identical (\emph{homogeneous}). However, not only may there be multiple classes of heterogeneity, but each instance within
a class may vary due to, for example, flexibility \textcolor{black}{(\emph{continuous
heterogeneity})~\cite{taylor2008retrospective,BaiMcMullanScheres2015} (see also the report of the 2017 Nobel Prize in Chemistry \cite{nobelprizechemistry2017}).}
\par
Symmetry is an important characteristic of many biological particles. The Protein Data Bank~\cite{proteindatabank} contained
130,005 structures and 39\% had a rotational symmetry. \textcolor{black}{Some recent study~ \cite{YiliZhengQiuWangDoerschukJOSA2012} has relaxed the \emph{homogeneous} class assumption by merging the {symmetry} property of the biological object. Specifically, using the expression of the electron scattering intensity of the object in Eq.~\ref{eq:ScatteringIntensity}, the reconstruction algorithm imposes the
symmetry on $\rho(\vx)$, which achieves the assumption that all instances within each class are different but have identical symmetry ({\emph{symmetric individuals}})~\cite[Eqs.~55--56]{YiliZhengQiuWangDoerschukJOSA2012}. The invariant basis (setting $p=1$ in Eq.~\ref{eq:ScatteringIntensity}) becomes sufficient to achieve such an assumption. In the case of virus particles, for example, most of which exhibit icosahedral symmetry~\cite{CasperKlug1962}, the icosahedral basis functions associated to the identity irrep~\cite{ZhengDoerschukTln1997} have been popularly employed in the Fourier series~\cite{QiuWangMatsuiDomitrovicYiliZhengDoerschukJohnsonJSB2012,TangKearneyQiuWangDoerschukBakerJohnsonJMolRecog2014,YunyeGongVeeslerDoerschukJohnsonJSB2016}}. 
\par
\textcolor{black}{
Our goal is to further merge the ideas that biological particles obey {{symmetry}} and that different instances of the particle are heterogeneous due to different vibrational states (\emph{continuous heterogeneity}). In the continuous
heterogeneity situation, it becomes more natural to impose the symmetry on the {$1^{st}$}- and $2^{nd}$-order \emph{statistics} of $\rho(\vx)$ for the particle ({\emph{symmetric statistics}})~\cite{XuVeeslerDoerschukJohnsonHK97JSB2017,XuDoerschukIEEETransImageProc2019}, rather
than on the $\rho(\vx)$ itself. In this more realistic assumption, since only the statistics have symmetry, the individual particles may be non-symmetric, and therefore, the invariant basis becomes no longer sufficient. Instead, a complete basis are required.}
\par
\textcolor{black}{
The combined ideas of \emph{continuous heterogeneity} and {\emph{symmetric statistics}} require constraints on the mean and covariance of the coefficients vector $w$. As described in \cite[Section~V]{XuDoerschukIEEETransImageProc2019}, the constraints are simplest if each basis function transforms under rotations of the group as some row of some irrep of the group (Eq.~\ref{eq:PFprop}) and if all of the basis functions are real valued (Eq.~\ref{eq:I:isreal}). These two
goals are the primary topic of this paper. Using harmonic functions (Eq.~\ref{eq:IfromY}) helps characterize the spatial resolution of the estimated electron scattering intensity and leads to simple formulas for both the electron scattering intensity and the 3-D Fourier transform of the electron scattering intensity. Using orthonormal functions (Eq.~\ref{eq:I:orthonormal}) improves the numerical properties of the inverse problem.}
\par 
\textcolor{black}{
Our focus on real-valued basis functions comes from the fact that the electron scattering intensity $\rho(\vx)$ is real valued and the complete orthonormal radial functions $h(x)$ are also real valued (e.g., the Spherical Bessel functions). Therefore, if all the angular basis functions $F(\vx/x)$ are also real valued, then the coefficients $w$ can be real valued which simplifies the statistical estimation problem in two ways.  Suppose $w$ must be complex. The first complication is that it is necessary to estimate both the expectation of $w w^T$ and of $w w^H$.  The second complication is that it is necessary to account for constraints on the allowed values of $w$, much like a 1-D Fourier series for a real-valued function that is periodic with period $\Delta$ requires that the coefficients (denoted by $w_n$) satisfy $w_n=w_{-n}^\ast$ when the basis functions for the Fourier series are $\exp(i (2\pi/\Delta) n t)$.  Our focus on real-valued basis functions which allow real-valued coefficients permits us to avoid both of these complications for the important case of the icosahedral and octahedral groups}.
\par
\textcolor{black}{
With these unique properties, these basis functions that we study in this paper have been employed in the recent 3D image reconstruction calculations~\cite{xu2018allosteric,XuDoerschukIEEETransImageProc2019}, which eliminated the well-recognized long-standing distortions on and near symmetry axes of the biological object that were reconstructed by previous calculations  \cite[p.173]{Ludtke.MethodsEnzymology.2016} (see also~\cite{ZhengWangDoerschukJOSA2012,WangFuKhayatDoerschukJohnson2011,domitrovic2013virus}) in which only the functions of invariant basis were used. This has allowed important biological functions to be discovered along the symmetry axes of the virus particles~\cite{xu2018allosteric}. Furthermore, using all basis functions of this type dramatically reduces the number of parameters that must be estimated from the image
data~\cite[Figure~2]{XuDoerschukIEEETransImageProc2019} and makes each
parameter independent of the other parameters. An estimator might represent the electron scattering intensity $\rho(\vx)$ as a weighted sum of some alternative set of functions, e.g., as a 3-D array of voxels. Even in that case, the functions described in this paper would still be important, because it is likely that they would be involved in describing the constraint on the statistics of the weights for the alternative set of functions.}

\section{Notation}
\par
The following notation is used throughout the paper.
Let $M$ be a matrix with entry of $i$th row and $j$th column denoted by $(M)_{i,j}$. Then $M^\ast$ is the complex conjugate of $M$, $M^T$
is the transpose of $M$, and $M^H$ is the Hermitian transpose of $M$, i.e.,
${(M^T)}^\ast$. $I_n\in\mathbb{R}^{n\times n}$ is the identity matrix.
$\Re$ and $\Im$ are the real and imaginary parts, respectively, of their
arguments. For 3-D vectors, $x=\|\vx\|_2$ and $\vx/x$ is shorthand for the
$(\theta,\phi)$ angles in the spherical coordinate system.
Integration of a function $f:\mathbb{R}^3\rightarrow\mathbb{C}$ over the
surface of the sphere in $\mathbb{R}^3$ is denoted by $\int
f(\vx)\dd\Omega$ meaning $\int_{\theta=0}^{\pi} \int_{\phi=0}^{2\pi}
f(x,\theta,\phi)\sin\theta\dd\theta\dd\phi$.
The Kronecker delta function is denoted by $\delta_{i,j}$ and has value 1
if $i=j$ and value 0 otherwise.
\par ``Representation'' and ``Irreducible representation'' are abbreviated by
``rep'' and ``irrep'', respectively. \textcolor{black}{For the finite group $G$, let $\Gamma^p(g)\in\mathbb{C}^{d_p\times d_p}$ be the unitary irrep matrix of the $p$th irrep for the group element $g\in G$ with group order $N_g$, where $p\in\{1,\dots,\Nirrep\}$ indexes the inequivalent unitary irreps of $G$, and $\Nirrep$ is the total number of inequivalent irreps. Note that the values in the matrix $\Gamma^p(g)$ for all $g\in G$ may be either real or complex. In Section \ref{sec:computingrealirrepmatrices}, $\Gamma_c^p(g)\in\mathbb{C}^{d_p\times d_p}$ specifically denotes the complex-valued unitary irrep matrix, whereas $\Gamma_r^p(g)\in\mathbb{R}^{d_p\times d_p}$ denotes the real-valued orthonormal irrep matrix of the $p$th irrep.}  

\section{Real basis functions require and generate real irreps}
\label{sec:realonly}
The one result in this section, Lemma~\ref{prop:needrealirreps}, states
that a real-valued set of orthonormal basis functions of the $p$th irrep of
the finite group $G$ exists if and only if a real irrep exists, independent of
whether the basis functions are expressed as linear combinations of
spherical harmonics of fixed degree $l$. First of all, the basis functions of a group of coordinate transformations $G$ have the following definition:
\begin{definition}\label{def:basisfunction}
	(\cite[Eq.~1.26, p.~20]{Cornwell1984})
	A set of linearly independent functions $F_{p,1},\dots, F_{p,d_p}$, that associate with the $p^{th}$ irrep, form a basis of $G$, denoted by $ F_{p}(\cdot)=\left[\begin{smallmatrix}F_{p,1}(\cdot)\\
	\vdots\\
	F_{p,d_p}(\cdot) \end{smallmatrix}\right]:\mathbb{R}^3\rightarrow\mathbb{C}^{d_p}$, if for every $g\in G$,
	\begin{equation}\label{eq:I:rotationEntry0}
	P(g) F_{p,j}(\vx/x)
	=
	\sum_{m=1}^{d_p}(\Gamma^p(g))_{m,j}
	F_{p,m}(\vx/x),\text{~for $j=1,\dots, d_p$,}
	\end{equation}
	\begin{equation}\label{eq:I:rotationEntry}
	\mbox{
	or in the vector form,~~~}
	P(g) F_{p}(\vx/x)=
	(\Gamma^p(g))^T
	F_{p}(\vx/x),
	\end{equation}
	where $P(g)$ is the abstract rotation operator, i.e., $P(g)f(\vx)=	f(R_g^{-1}\vx)$, and $R_g\in\mathbb{R}^{3\times 3}$ with $R_g^{-1}=R_g^T$ and $\det R_g=+1$ is the rotation matrix corresponding to $g\in G$. When $P(g)$ is applied to a vector-valued function, it operates on each component of the vector.
	The function $F_{p,j}(\cdot)$ for $j\in\{1,\dots,d_p\}$ is said to ``transform as the $j^{th}$ row" of the $p^{th}$ irrep of the finite group $G$.
\end{definition}

\begin{lemma}\label{prop:needrealirreps}
	{Real-valued orthonormal basis functions of the $p^{th}$ irrep of the finite group $G$ exist if and only if the real-valued $p^{th}$ irrep exists for $G$.}
\end{lemma}
\begin{proof}
	Real-valued functions imply real-valued irreps:
	Let $F_{p,\zeta}(\vx/x)$ be the $\zeta$th orthonormal vector basis of $G$ that associate with the $p$th irrep (defined in Eq.~\ref{eq:I:rotationEntry}).
	
Define $J_{\zeta;\zeta^\prime}^{p;\pp}\in\mathbb{R}^{d_p\times d_p}$ by
	\begin{eqnarray}
	J_{\zeta;\zeta^\prime}^{p;\pp}
	&=&
	\int
	\left[P(g){
		F_{p,\zeta}
	}
	(\vx/x)\right]
	\left[
	P(g){
		 F_{\pp,\zeta^\prime}
	}
	(\vx/x)
	\right]^T
	\dd\Omega
	.
	\end{eqnarray}
	Evaluate $J_{\zeta;\zeta^\prime}^{p;\pp}$ twice.
	In the first evaluation,
	\begin{eqnarray}
	J_{\zeta;\zeta^\prime}^{p;\pp}
	&=&
	\int
	 F_{p,\zeta}
	(\vx/x)
	\left[
	 F_{\pp,\zeta^\prime}
	(\vx/x)
	\right]^T
	\dd\Omega=I_{d_p}\delta_{p,\pp}\delta_{\zeta,\zeta^\prime}
	,
	\end{eqnarray}
	where the first equality is due to rotation the coordinate system by $R_g$,
	and the second equality is due to the fact that the \{$F_{p,\zeta}$\} are
	orthonormal.
	\par
	In the second evaluation, use Eq.~\ref{eq:I:rotationEntry}, rearrange, and use
	the orthonormality of \{$F_{p,\zeta}$\} to get
	\begin{eqnarray}
	J_{\zeta;\zeta^\prime}^{p;\pp}
	&=&
	\int
	(\Gamma^p(g))^T
	 F_{p,\zeta}
	(\vx/x)
	\left[
	(\Gamma^{\pp}(g))^T
	 F_{\pp;\zeta^\prime}
	(\vx/x)
	\right]^T
	\dd\Omega
	\\
	&=&
	(\Gamma^p(g))^T
	\left[\int
	 F_{p,\zeta}
	(\vx/x)
	\left[
	 F_{\pp,\zeta^\prime}
	(\vx/x)
	\right]^T
	\dd\Omega
	\right]
	\Gamma^{\pp}(g)
	\\
	&=&
	(\Gamma^p(g))^T
	\left[I_{d_p}
	\delta_{p,\pp}
	\delta_{\zeta,\zeta^\prime}\right]
	\Gamma^{\pp}(g)
	\\
	&=&
	(\Gamma^p(g))^T
	\Gamma^{\pp}(g)
	\delta_{p,\pp}
	\delta_{\zeta,\zeta^\prime}
	.
	\end{eqnarray}
	Equating the two expressions for $J_{\zeta;\zeta^\prime}^{p;\pp}$ gives $
	(\Gamma^p(g))^T
	\Gamma^p(g)
	=
	I_{d_p}$.
	Since $\Gamma^p(g)$ is unitary, multiplying on the right by
	$({\Gamma^p(g)})^H$ implies that $({\Gamma^p(g)})^T=({\Gamma^p(g)})^H$ so
	that $\Gamma^p(g)$ is real.
	\par
	Real-valued irreps imply real-valued functions:
	This follows from the results in the later section, Lemma~\ref{lemma:projectionapplied} and
	Eqs.~\ref{eq:VectorBasisFunction2}--\ref{eq:MatrixBasisFunction}.
\end{proof}

\section{Computation of real irrep matrices}
\label{sec:computingrealirrepmatrices}
In  Section~\ref{sec:realonly}, we have proved that the real basis functions requires the real irreps. In this section, starting from a set of matrices that make up a complex-valued unitary irrep, we provide an approach to compute an equivalent real-valued orthonormal irrep if that's possible. The question of existence of such equivalent real-valued orthonormal irrep is answered by the Frobenious-Schur
theory~\cite[p.~129, Theorem~III]{Cornwell1984} (see also \cite[p.~708]{wightman1993collected}), which is summarized in the following paragraph. 
\par
The Frobenious-Schur indicator, denoted by $\indicator$, is defined as\vspace{-.6em}
\[\indicator(\{\Gamma_c^p(g)\}_{g\in G})=(1/\Ngroup)\sum_{g\in G} \tr[\Gamma_c^p(g)].\vspace{-.6em}\]
According to the Frobenious-Schur theory, the value of $\indicator$ is 1, 0, or -1 has the following implications:
\begin{enumerate}
    \item[a)] If $\indicator=1$, then the irrep is {\em potentially real}, meaning that there
exists a unitary matrix, denoted by $S^p$, such that $(S^p)^H \Gamma^p(g) S^p$ is real for all $g\in G$.
\item[b)] If $\indicator=0$, then the irrep is {\em essentially complex}, meaning that there
is no similarity transformation that relates $\Gamma^p$ and $(\Gamma^p)^\ast$.
\item[c)] If $\indicator=-1$, then the irrep is {\em pseudo real}, meaning that there exists a
unitary matrix, denoted by $T^p$, such that $(\Gamma^p(g))^\ast=(T^p)^H \Gamma^p(g) T^p$ for all $g\in G$, but no similarity transformation exists such that $(\Gamma^p(g))^\ast$ real for all $g\in G$.
\end{enumerate}
\par
The remainder of this section applies only to potentially real irreps (i.e. $\indicator=1$). Given the fact that the direct sum of the disjoint subspaces defined by the irreps of $G$ from the $L^2$ space \cite[p. 65-67]{cornwell1997group}, it is satisfactory for generating any set of orthonormal matrices that construct an irrep of $G$, and the question of uniqueness does not arise for our purpose of study. 
In the following, we describe a three-step algorithm to compute such a unitary matrix $S^p\in\mathbb{C}^{d_p\times d_p}$ for the case of potentially real irreps:
\begin{enumerate}
	\item For any such unitary matrix $S^p$, show that the complex irrep $\Gamma_c^p$ is similar to its complex conjugate $(\Gamma_c^p)^\star$ with the similarity transformation $S^p(S^p)^T$.
	\item Find a matrix $C^p$, which is an explicit function
	of $\Gamma_c^p$, and is a similarity matrix relating the two sets of matrices $\Gamma_c^p$ and $(\Gamma_c^p)^\star$.
	\item Factor $C^p$ to compute a particular $S^p$.
\end{enumerate}
\par
Step 1 
is achieved by Lemma~\ref{lemma:SStrans}.
\begin{lemma}
	\label{lemma:SStrans}
	Suppose that the $p$th irrep of the group $G$ which is represented by the complex unitary matrices $\Gamma_c^p(g)$ ($g\in G$) is potentially real.
	Let $S^p\in\mathbb{C}^{d_p\times d_p}$ denote a unitary matrix.
	The following two statements are equivalent:
	\begin{equation}\label{eq:complex2real}
	\mbox{For all $g\in G$,~}
	\Gamma_r^p(g)=(S^p)^H \Gamma_c^p(g) S^p~\mbox{such that $\Gamma_r^p(g)\in\mathbb{R}^{d_p\times d_p}$.}
	\end{equation}
	\begin{equation}
	\mbox{For all $g\in G$,~}
	[
	S^p
	(S^p)^T
	]^{-1}
	\Gamma_c^p(g)
	[
	S^p
	(S^p)^T
	]
	=
	(\Gamma_c^p(g))^\ast
	\label{eq:GammaSimilarToGammaConjugate}
	.
	\end{equation}
\end{lemma}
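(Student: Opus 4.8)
The plan is to prove both implications by reducing the reality of $\Gamma_r^p(g)=(S^p)^H\Gamma_c^p(g)S^p$ to the single matrix identity that a matrix equals its own complex conjugate. First I would record the two facts that follow from $S^p$ being unitary: the defining relation $(S^p)^H S^p=S^p(S^p)^H=I_{d_p}$ and, by taking the complex conjugate of this relation, $(S^p)^T(S^p)^\ast=(S^p)^\ast(S^p)^T=I_{d_p}$. Writing $A^p=S^p(S^p)^T$, these two facts combine to give the crucial identity $(A^p)^{-1}=(S^p)^\ast(S^p)^H=(A^p)^\ast$; that is, the inverse of $S^p(S^p)^T$ is exactly its complex conjugate. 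This self-inverse-under-conjugation property is the observation that makes the two forms of the statement line up, and it is the structural heart of the argument.

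For the forward direction (\ref{eq:complex2real})$\Rightarrow$(\ref{eq:GammaSimilarToGammaConjugate}), I would use that $\Gamma_r^p(g)$ is real precisely when $\Gamma_r^p(g)=(\Gamma_r^p(g))^\ast$. Conjugating the definition gives $(\Gamma_r^p(g))^\ast=(S^p)^T(\Gamma_c^p(g))^\ast(S^p)^\ast$, so reality is equivalent to $(S^p)^H\Gamma_c^p(g)S^p=(S^p)^T(\Gamma_c^p(g))^\ast(S^p)^\ast$. Multiplying this equation on the left by $S^p$ and on the right by $(S^p)^H$, and collapsing the resulting products via $S^p(S^p)^H=I_{d_p}$ together with $(S^p)^\ast(S^p)^H=(A^p)^{-1}$, yields $\Gamma_c^p(g)=A^p(\Gamma_c^p(g))^\ast(A^p)^{-1}$, which rearranges to exactly (\ref{eq:GammaSimilarToGammaConjugate}).

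For the converse I would run the same algebraic steps in reverse: starting from (\ref{eq:GammaSimilarToGammaConjugate}), substitute $A^p=S^p(S^p)^T$ and $(A^p)^{-1}=(S^p)^\ast(S^p)^H$, then multiply on the left by $(S^p)^H$ and on the right by $S^p$ and again invoke $(S^p)^H S^p=I_{d_p}$. The two sides then reduce to $\Gamma_r^p(g)$ and $(\Gamma_r^p(g))^\ast$ respectively, forcing $\Gamma_r^p(g)$ to be real. I do not expect a genuine obstacle here, since the whole argument is bookkeeping; the only place demanding care, and the step I would verify most closely, is the correct placement of the Hermitian transpose versus the ordinary transpose when conjugating, together with the repeated appeals to unitarity in its two conjugate forms $(S^p)^H S^p=I_{d_p}$ and $(S^p)^T(S^p)^\ast=I_{d_p}$, because it is precisely the interplay of these that produces the property of $A^p$ on which both directions rest. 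Finally I would note that the hypothesis that the $p$th irrep is potentially real is not actually used in establishing this equivalence; it is needed only to guarantee that some unitary $S^p$ satisfying (\ref{eq:complex2real}) exists at all, which is the concern of the subsequent steps of the algorithm rather than of this lemma.
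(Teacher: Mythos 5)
Your strategy is the same as the paper's: pure matrix bookkeeping driven by the two conjugate forms of unitarity, $(S^p)^HS^p=I_{d_p}$ and $(S^p)^T(S^p)^\ast=I_{d_p}$, and your identification of the key identity $[S^p(S^p)^T]^{-1}=(S^p)^\ast(S^p)^H=[S^p(S^p)^T]^\ast$ is exactly the structural point on which the paper's proof turns. The forward direction as you describe it is correct: multiplying $(S^p)^H\Gamma_c^p(g)S^p=(S^p)^T(\Gamma_c^p(g))^\ast(S^p)^\ast$ by $S^p$ on the left and $(S^p)^H$ on the right does collapse to $\Gamma_c^p(g)=A^p(\Gamma_c^p(g))^\ast(A^p)^{-1}$ with $A^p=S^p(S^p)^T$. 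Your closing remark that the ``potentially real'' hypothesis is not actually used in establishing the equivalence is also accurate; the paper does not use it in this proof either.

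However, the converse as you state it does not go through. Starting from $(S^p)^\ast(S^p)^H\Gamma_c^p(g)S^p(S^p)^T=(\Gamma_c^p(g))^\ast$ and multiplying on the left by $(S^p)^H$ and on the right by $S^p$ produces the products $(S^p)^H(S^p)^\ast$ on the far left and $(S^p)^TS^p$ on the far right, and neither of these is the identity for a general unitary $S^p$ (they are complex conjugates of one another and equal $I_{d_p}$ only when $S^p$ is real orthogonal). Likewise the right side becomes $(S^p)^H(\Gamma_c^p(g))^\ast S^p$, which is not $(\Gamma_r^p(g))^\ast=(S^p)^T(\Gamma_c^p(g))^\ast(S^p)^\ast$, so the claimed reduction of the two sides to $\Gamma_r^p(g)$ and $(\Gamma_r^p(g))^\ast$ fails. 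The repair is immediate and is exactly what the paper does: multiply on the left by $(S^p)^T$ and on the right by $(S^p)^\ast$, invoking $(S^p)^T(S^p)^\ast=I_{d_p}$; then the left side reduces to $(S^p)^H\Gamma_c^p(g)S^p$ and the right side to $\bigl[(S^p)^H\Gamma_c^p(g)S^p\bigr]^\ast$, forcing $\Gamma_r^p(g)$ to be real. With that one correction your argument coincides with the paper's.
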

Please see Appendix~\ref{sec:appendixA} for the proof. 
\par
Step 2 computes a non-unitary symmetric matrix ($Z^p$) (Lemma~\ref{lemma:Z:def}), which is then normalized ($C^p$) to be unitary
(Corollary~\ref{corollary:Cp:def}).
\begin{lemma}
	\label{lemma:Z:def}
	Suppose that $\Gamma_c^p(g)$ ($g\in G$) are complex unitary irrep matrices for the $p$th rep of the group G which is potentially real.
	Let $A^p\in\mathbb{C}^{d_p\times d_p}$ be a nonsingular transpose-symmetric matrix (i.e., $(A^p)^T=A^p$) and $Z^p$ be defined by Eq.~\ref{eq:Z:def}, specifically,
	\begin{equation}
	Z^p
	=
	\frac{1}{\Ngroup}
	\sum_{g\in G}
	\Gamma_c^p(g)
	A^p
	({({\Gamma_c^p(g)})^\ast})^{-1}
	\label{eq:Z:def}
	.
	\end{equation}
	If $Z^p$ is nonzero, then $Z^p$ has the following properties:
	\begin{enumerate}
		\item
		$({Z^p})^T=Z^p$.
		\item
		${(Z^p)}^\ast
		Z^p
		=
		c_Z I_{d_p}$
		where $c_Z\in\mathbb{R}^+$.
		\item
		For all $g\in G$, $({\Gamma_c^p(g)})^\ast=({Z^p})^\ast\Gamma_c^p(g)Z^p$.
	\end{enumerate}
\end{lemma}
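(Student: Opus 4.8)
The plan is to exploit unitarity to put $Z^p$ in a transpose‑symmetric form, then to recognize $Z^p$ as an intertwiner between $\Gamma_c^p$ and its complex conjugate, after which Schur's lemma does most of the work. First I would use $(\Gamma_c^p(g))^H\Gamma_c^p(g)=I_{d_p}$ to deduce $((\Gamma_c^p(g))^*)^{-1}=(\Gamma_c^p(g))^T$, so that Eq.~\eqref{eq:Z:def} becomes
\begin{equation}
Z^p=\frac{1}{\Ngroup}\sum_{g\in G}\Gamma_c^p(g)\,A^p\,(\Gamma_c^p(g))^T .
\end{equation}
Property~1 is then immediate: transposing this expression and using $(A^p)^T=A^p$ returns the identical sum, so $(Z^p)^T=Z^p$.

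Next I would establish the intertwining identity that drives the remaining two properties. Multiplying the rewritten $Z^p$ on the left by $\Gamma_c^p(h)$ and on the right by $(\Gamma_c^p(h))^T$ and reindexing the group sum by $g\mapsto hg$ shows $\Gamma_c^p(h)\,Z^p\,(\Gamma_c^p(h))^T=Z^p$ for every $h\in G$. Using $((\Gamma_c^p(h))^T)^{-1}=(\Gamma_c^p(h))^*$, this is equivalent to $\Gamma_c^p(h)\,Z^p=Z^p\,(\Gamma_c^p(h))^*$; that is, $Z^p$ intertwines $\Gamma_c^p$ with the (also irreducible) conjugate representation $(\Gamma_c^p)^*$. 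Conjugating this relation gives $(\Gamma_c^p(h))^*(Z^p)^*=(Z^p)^*\Gamma_c^p(h)$, the corresponding intertwining in the opposite direction.

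For Property~2 I would compose the two intertwining relations to show that $(Z^p)^*Z^p$ commutes with $(\Gamma_c^p(h))^*$ for all $h$. Since $(\Gamma_c^p)^*$ is irreducible, Schur's lemma~\cite{Cornwell1984} forces $(Z^p)^*Z^p=c_Z I_{d_p}$ for some scalar $c_Z\in\mathbb{C}$. To pin down $c_Z\in\mathbb{R}^+$ I would invoke Property~1: symmetry gives $(Z^p)^H=(Z^p)^*$, so $(Z^p)^*Z^p=(Z^p)^H Z^p$ is Hermitian positive semidefinite, and it is in fact positive definite once $Z^p$ is known to be invertible. Property~3 is then obtained directly from the intertwining relation $(Z^p)^{-1}\Gamma_c^p(g)Z^p=(\Gamma_c^p(g))^*$ together with the normalization $(Z^p)^*=(Z^p)^H=c_Z (Z^p)^{-1}$ supplied by Property~2.

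The main obstacle is establishing the invertibility of $Z^p$, on which both the positivity of $c_Z$ and the scalar in Property~3 rest. I would argue this again from Schur's lemma: because the $p$th irrep is potentially real, $\Gamma_c^p$ and $(\Gamma_c^p)^*$ are \emph{equivalent} irreducible representations, so the intertwiner $Z^p$, which is nonzero by hypothesis, must have trivial kernel and full image and hence be nonsingular. This is the one place where potential reality is genuinely used, and it is also what guarantees $c_Z\neq 0$; everything else is bookkeeping with unitarity and the group‑average reindexing, with the residual positive factor $c_Z$ being exactly what the normalization of the subsequent Corollary~\ref{corollary:Cp:def} removes.
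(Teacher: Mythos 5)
Your proposal is correct and follows essentially the same route as the paper's proof: rewrite $Z^p$ using unitarity to get transpose-symmetry, use the group-reindexing (Rearrangement Theorem) to obtain the intertwining relation $\Gamma_c^p(g)Z^p=Z^p(\Gamma_c^p(g))^\ast$, and apply Schur's lemma for nonsingularity and the scalar in Property~2. The only difference is that you derive Property~2 self-containedly (composing the two intertwining relations and using $(Z^p)^H=(Z^p)^\ast$ to get positivity) where the paper simply cites Cornwell's Theorem~II, p.~128, and like the paper your Property~3 implicitly carries the factor $c_Z$ that is only removed by the normalization in Corollary~\ref{corollary:Cp:def}.
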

Please see Appendix~\ref{sec:appendixA} for the proof.
\par
It is important to find a matrix $A^p$ such that the matrix $Z^p$ is
nonzero.
For the three polyhedral groups that we consider in this paper,
this issue is discussed in Section~\ref{sec:platonicsolids}.
\begin{corollary}
	\label{corollary:Cp:def}
	Define $C^p$ by
	\begin{equation}
	C^p
	=
	Z^p/\sqrt{c_Z}
	\label{eq:Cp:def}
	.
	\end{equation}
	Then $C^p$ has the following properties:
	\begin{enumerate}
		\item
		\label{item:Cp:transpose}
		$({C^p})^T=C^p$.
		\item
		\label{item:Cp:hermitiantranspose}
		$
		{(C^p)}^\ast
		C^p
		=
		I_{d_p}
		$.
		\item
		\label{item:Cp:issimilaritymatrix}
		For all $g\in G$,
		$
		({\Gamma_c^p(g)})^\ast
		=
		({C^p})^\ast
		\Gamma_c^p(g)
		C^p
		$
		.
	\end{enumerate}
\end{corollary}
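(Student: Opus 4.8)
The plan is to treat Corollary~\ref{corollary:Cp:def} as a routine renormalization of the matrix $Z^p$ supplied by Lemma~\ref{lemma:Z:def}. Because $c_Z\in\mathbb{R}^+$, the scalar $\sqrt{c_Z}$ is a well-defined \emph{positive real} number, so $C^p=Z^p/\sqrt{c_Z}$ is simply $Z^p$ rescaled by a real constant. The only structural facts I would use are that, for a scalar multiple, $(\alpha M)^T=\alpha M^T$ and $(\alpha M)^\ast=\alpha^\ast M^\ast$, together with $(\sqrt{c_Z})^\ast=\sqrt{c_Z}$ and $(\sqrt{c_Z})^2=c_Z$; a real positive scalar therefore passes unchanged through both transposition and complex conjugation. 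Each of the three claimed properties of $C^p$ is then obtained by substituting the definition and invoking the correspondingly numbered property of $Z^p$ from Lemma~\ref{lemma:Z:def}.

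First I would prove Property~\ref{item:Cp:transpose}: taking the transpose of $C^p=Z^p/\sqrt{c_Z}$ and pulling out the real scalar gives $(C^p)^T=(Z^p)^T/\sqrt{c_Z}$, and Lemma~\ref{lemma:Z:def}(1), namely $(Z^p)^T=Z^p$, immediately yields $(C^p)^T=C^p$. Next, for Property~\ref{item:Cp:hermitiantranspose} I would compute
\[
(C^p)^\ast C^p=\frac{(Z^p)^\ast}{\sqrt{c_Z}}\,\frac{Z^p}{\sqrt{c_Z}}=\frac{1}{c_Z}\,(Z^p)^\ast Z^p,
\]
and then substitute Lemma~\ref{lemma:Z:def}(2), $(Z^p)^\ast Z^p=c_Z I_{d_p}$, so that the factor $c_Z$ cancels and $(C^p)^\ast C^p=I_{d_p}$. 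This is exactly the step that singles out $\sqrt{c_Z}$ as the correct normalizing constant, and it records that $C^p$ is unitary.

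Finally, Property~\ref{item:Cp:issimilaritymatrix} follows by the same substitution. Writing
\[
(C^p)^\ast\,\Gamma_c^p(g)\,C^p=\frac{1}{c_Z}\,(Z^p)^\ast\,\Gamma_c^p(g)\,Z^p,
\]
the two normalizing factors contribute precisely $1/c_Z$, which is the constant needed to reduce the conjugation relation of Lemma~\ref{lemma:Z:def}(3) to $(\Gamma_c^p(g))^\ast=(C^p)^\ast\Gamma_c^p(g)C^p$ for every $g\in G$. Equivalently, Property~\ref{item:Cp:hermitiantranspose} shows $C^p$ is unitary, so $(C^p)^\ast=(C^p)^{-1}$ and the relation is a genuine similarity transformation intertwining $\Gamma_c^p$ with its complex conjugate $(\Gamma_c^p)^\ast$.

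I expect no substantive obstacle here: Lemma~\ref{lemma:Z:def} carries all of the representation-theoretic content, and the corollary is pure scalar bookkeeping. The one point that merits care is the consistent tracking of the normalization across the three properties, in particular confirming that $c_Z>0$ so that $\sqrt{c_Z}$ is real (and hence commutes with $(\cdot)^\ast$ and $(\cdot)^T$ unchanged), and checking that the single factor $c_Z$ produced by the two copies of $1/\sqrt{c_Z}$ matches exactly the $c_Z$ appearing in Lemma~\ref{lemma:Z:def}(2)--(3). That one cancellation is what simultaneously renders $C^p$ symmetric, unitary, and a conjugation-intertwiner.
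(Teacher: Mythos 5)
Your proof is correct and is essentially the argument the paper intends: the paper states the corollary without its own proof, treating it as an immediate normalization of $Z^p$ using the three properties of Lemma~\ref{lemma:Z:def}, exactly as you do. The one bookkeeping point worth making explicit in Property~\ref{item:Cp:issimilaritymatrix} is where the compensating factor of $c_Z$ comes from: the relation actually established in the proof of Lemma~\ref{lemma:Z:def} is $\Gamma_c^p(g)Z^p=Z^p({\Gamma_c^p(g)})^\ast$, which together with $({Z^p})^\ast Z^p=c_Z I_{d_p}$ gives $({Z^p})^\ast\Gamma_c^p(g)Z^p=c_Z({\Gamma_c^p(g)})^\ast$, so your prefactor $1/c_Z$ cancels exactly --- equivalently, your closing observation that $({C^p})^\ast=({C^p})^{-1}$ already closes this step.
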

\par
The matrix $S^p$ in the definition of potentially real is not unique.
Comparing Property~\ref{item:Cp:issimilaritymatrix} of Corollary
\ref{corollary:Cp:def} and Eq.~\ref{eq:GammaSimilarToGammaConjugate}, $S^p$
can be restricted to satisfy
\begin{equation}
C^p=S^p{(S^p)}^T,
\label{eq:SisfactorofC}
\end{equation}
noting, however, that even with this restriction, $S^p$ is still not unique.
Because Lemma~\ref{lemma:SStrans} is ``if and only if'', any unitary matrix
$S^p$ that satisfies Eq.~\ref{eq:SisfactorofC} is a satisfactory similarity
matrix.
The existence of the unitary factorization described by
Eq.~\ref{eq:SisfactorofC} is guaranteed by the Takagi Factorization~\cite[Corollary~4.4.6, p.~207]{HornJohnson1985}.
\par
Step~3 is to perform the factorization of $C^p$ and a general algorithm is
provided by Lemma~\ref{lemma:eigenVSconeigen} which is based on the
relationship between the coneigenvectors (as is described in Property 3 of Lemma~\ref{lemma:eigenVSconeigen}) of a unitary symmetric matrix $Q$ and the eigenvectors of its real representation matrix $B$, which is
defined by
$B=\Bigl[\begin{array}{cc}
\Re{Q} & \Im{Q} \\ \Im{Q} & -\Re{Q}
\end{array} \Bigr]\in\mathbb{R}^{2n\times 2n}$.
\begin{lemma}
	\label{lemma:eigenVSconeigen}
	Let $Q\in\mathbb{C}^{n\times n}$ be a unitary symmetric matrix, i.e.,
	$Q^T=Q$ and $Q Q^\ast=I_n$.
	Let $B\in\mathbb{R}^{2n\times 2n}$ be the real representation of $Q$, i.e., 
	$B=\bigl[\begin{smallmatrix}
	\Re{Q} & \Im{Q} \\ \Im{Q} & -\Re{Q}
	\end{smallmatrix} \bigr]\in\mathbb{R}^{2n\times 2n}$.
	Then, the following properties hold:
	\begin{enumerate}
		\item
		$B$ is nonsingular and has $2n$ real eigenvalues and $2n$ orthonormal
		eigenvectors.
		\item
		The eigenvectors and eigenvalues of $B$ are in pairs, specifically,
		\begin{equation}
		B
		\left[
		\begin{smallmatrix}
		x \\
		-y
		\end{smallmatrix}
		\right]
		=\lambda
		\left[
		\begin{smallmatrix}
		x \\
		-y
		\end{smallmatrix}
		\right]
		\mbox{~if and only if~}
		B
		\left[
		\begin{smallmatrix}
		y \\
		x
		\end{smallmatrix}
		\right]
		=
		-\lambda
		\left[
		\begin{smallmatrix}
		y \\
		x
		\end{smallmatrix}
		\right]
		.\nonumber
		\end{equation}
		\item
		Let$\left[
		\begin{smallmatrix}
		x_1 \\
		-y_1
		\end{smallmatrix}
		\right]
		,
		\dots,
		\left[
		\begin{smallmatrix}
		x_n \\
		-y_n
		\end{smallmatrix}
		\right]$
		be the orthonormal eigenvectors of $B$ associated with $n$ positive
		eigenvalues of $\lambda_1, \dots, \lambda_n$.
		(Since $B$ is nonsingular, there are no zero eigenvalues.)
		Then $x_1-iy_1,\dots,x_n-iy_n$ are the set of orthonormal coneigenvectors
		of $Q$ associated with the $n$ coneigenvalues $+\lambda_k$, i.e.,
		$Q(x_k-iy_k)^\ast=\lambda_k(x_k-iy_k)$ for $k=1,...,n$.
		\item
		$\lambda_1=\dots=\lambda_n=1$.
		\item
		\label{item:U:def}
		Define $u_k=x_k-iy_k$ and $U=[u_1,\dots,u_n]\in\mathbb{C}^{n\times n}$.
		Then $U$ is unitary.
		\item
		$Q=U U^T$.
	\end{enumerate}
\end{lemma}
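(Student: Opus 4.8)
The plan is to exploit the decomposition $Q = R + iS$ with $R = \Re Q$ and $S = \Im Q$ both real. The hypotheses $Q^T = Q$ and $Q Q^\ast = I_n$ translate into $R^T = R$, $S^T = S$ (both symmetric) and, upon expanding $(R+iS)(R-iS) = I_n$ and separating real and imaginary parts, the two identities $R^2 + S^2 = I_n$ and $RS = SR$. From $R^T = R$ and $S^T = S$ it is immediate that $B^T = B$, so $B$ is a real symmetric matrix and the spectral theorem already supplies $2n$ real eigenvalues and a choice of $2n$ orthonormal eigenvectors. The decisive computation is that $B^2 = I_{2n}$: the four blocks of $B^2$ are $R^2 + S^2$, $RS - SR$, $SR - RS$, and $S^2 + R^2$, which collapse to $I_n$, $0$, $0$, $I_n$ by the two identities above. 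Hence $B$ is nonsingular (Property~1) and, being an involution, has every eigenvalue in $\{+1,-1\}$, which will yield Property~4 once the counting is settled.

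For the pairing in Property~2 I would introduce $P = \left[\begin{smallmatrix} 0 & -I_n \\ I_n & 0 \end{smallmatrix}\right]$, which sends $\left[\begin{smallmatrix} x \\ -y \end{smallmatrix}\right]$ to $\left[\begin{smallmatrix} y \\ x \end{smallmatrix}\right]$, and verify by block multiplication that $B$ and $P$ anticommute, $BP = -PB$. Then $Bv = \lambda v$ forces $B(Pv) = -P(Bv) = -\lambda(Pv)$, and since $P^2 = -I_{2n}$ is invertible the implication reverses, giving the stated equivalence. Because $P$ is then a bijection between the $+1$ and $-1$ eigenspaces of $B$, these eigenspaces have equal dimension; as they sum to $2n$, each has dimension $n$. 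Thus exactly $n$ eigenvalues equal $+1$, so $\lambda_1 = \cdots = \lambda_n = 1$ (Property~4).

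Property~3's coneigenvector relation follows by writing $B\left[\begin{smallmatrix} x_k \\ -y_k \end{smallmatrix}\right] = \lambda_k \left[\begin{smallmatrix} x_k \\ -y_k \end{smallmatrix}\right]$ in blocks to obtain $Rx_k - Sy_k = \lambda_k x_k$ and $Sx_k + Ry_k = -\lambda_k y_k$, and then computing $Q(x_k - iy_k)^\ast = (R+iS)(x_k+iy_k) = (Rx_k - Sy_k) + i(Sx_k + Ry_k) = \lambda_k(x_k - iy_k)$. The orthonormality of the $u_k = x_k - iy_k$ (the rest of Property~3, and Property~5) is where the real care is needed: expanding $u_j^H u_k$ gives real part $x_j^T x_k + y_j^T y_k$ and imaginary part $y_j^T x_k - x_j^T y_k$. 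The real part equals $\delta_{j,k}$ directly from orthonormality of the real eigenvectors, but the imaginary part is not automatic. I would kill it using Property~2: $\left[\begin{smallmatrix} y_k \\ x_k \end{smallmatrix}\right]$ is a $(-\lambda_k)$-eigenvector while $\left[\begin{smallmatrix} x_j \\ -y_j \end{smallmatrix}\right]$ is a $(+\lambda_j)$-eigenvector, and since both $\lambda$'s are positive these lie in orthogonal eigenspaces of the symmetric matrix $B$, forcing $x_j^T y_k - y_j^T x_k = 0$. Hence $U^H U = I_n$, i.e., $U$ is unitary. I expect this cross-eigenspace orthogonality to be the main obstacle, since it is the one step that genuinely needs the pairing structure rather than mere block algebra.

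Finally, for Property~6 I would combine $\lambda_k = 1$ with the coneigenvector relation to get $Q u_k^\ast = u_k$, i.e., $Q U^\ast = U$, so $Q = U (U^\ast)^{-1}$. Since $U$ is unitary, $(U^\ast)^{-1} = (U^{-1})^\ast = (U^H)^\ast = U^T$, and therefore $Q = U U^T$.
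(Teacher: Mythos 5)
Your proof is correct, but it reaches several of the properties by a genuinely different route than the paper. The paper proves nonsingularity of $B$ via a block-triangular similarity giving $\det B=\det(QQ^\ast)=|\det Q|^2>0$, obtains Property~4 by citing the Horn--Johnson correspondence between eigenvalues of $QQ^\ast$ and coneigenvalues of $Q$, and proves Property~5 from the completeness relation $VV^H=I_{2n}$ of the full $2n\times 2n$ eigenvector matrix together with the identity $\tilde U=LV=[U,\,iU]$, which forces $2UU^H=LL^H=2I_n$. You instead observe that $Q^TQ^\ast=I_n$ yields $R^2+S^2=I_n$ and $RS=SR$, hence $B^2=I_{2n}$: this single identity gives nonsingularity \emph{and} pins the spectrum to $\{\pm1\}$, and combined with the anticommuting matrix $P=\bigl[\begin{smallmatrix}0 & -I_n\\ I_n & 0\end{smallmatrix}\bigr]$ (which packages Property~2 and forces the $\pm1$ eigenspaces to have equal dimension $n$) it delivers Property~4 with no external coneigenvalue theorem. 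Your Property~5 argument is also different: you compute $u_j^Hu_k$ directly, getting the real part from orthonormality of the chosen eigenvectors and killing the imaginary part $y_j^Tx_k-x_j^Ty_k$ by noting that $\bigl[\begin{smallmatrix}x_j\\ -y_j\end{smallmatrix}\bigr]$ and $\bigl[\begin{smallmatrix}y_k\\ x_k\end{smallmatrix}\bigr]$ lie in the $+1$ and $-1$ eigenspaces of the symmetric matrix $B$, hence are orthogonal; this is valid for all $j,k$ including $j=k$. Properties~3 and~6 are handled essentially as in the paper. What each approach buys: yours is self-contained and exposes the underlying structure ($B$ is a symmetric involution and $P$ an anticommuting complex structure), whereas the paper's determinant argument for Property~1 needs only that $Q$ is nonsingular (not that $QQ^\ast=I_n$), and its appeal to the standard coneigenvalue result keeps the write-up shorter at the cost of an external citation.
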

Please see Appendix~\ref{sec:appendixA} for the proof.
\par
Applying Lemma~\ref{lemma:eigenVSconeigen} to $C^p$ results in a particular
matrix $S^p$ which is the $U$ matrix of Property~5.
The complete algorithm is summarized in Theorem~\ref{theorem:realirrep}.
\begin{theorem}
	\label{theorem:realirrep}
	A unitary matrix, $S^p\in\mathbb{C}^{d_p\times d_p}$, which is a similarity
	transformation between the provided potentially-real complex unitary irrep
	and a real orthonormal irrep, can be computed by the following steps:
	\begin{enumerate}
		\item
		Compute $Z^p$ by Eq.~\ref{eq:Z:def}.
		\item
		Compute $c_Z$ by Lemma~\ref{lemma:Z:def} Property~3 and compute $C^p$ by
		Eq.~\ref{eq:Cp:def}.
		\item
		Compute the eigenvectors and eigenvalues of
		\begin{equation}
		B^p
		=
		\left[
		\begin{array}{cc}
		\Re C^p & \Im C^p \\
		\Im C^p & -\Re C^p
		\end{array}
		\right]
		\in
		\mathbb{R}^{2d_p\times 2d_p}
		.
		\end{equation}
		\item
		Form the matrix $V^p\in\mathbb{R}^{2d_p\times d_p}$ whose columns are the $d_p$
		eigenvectors of $B^p$ that have positive eigenvalues.
		\item
		Then $S^p=[I_{d_p}, iI_{d_p}]V^p$.
	\end{enumerate}
\end{theorem}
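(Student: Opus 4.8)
The plan is to recognize Theorem~\ref{theorem:realirrep} as a constructive assembly of the three preceding results (Lemma~\ref{lemma:SStrans}, Corollary~\ref{corollary:Cp:def}, and Lemma~\ref{lemma:eigenVSconeigen}), so the proof is essentially a verification that each algorithmic step correctly instantiates the hypotheses and conclusions of those results. First I would observe that Steps~1--2 produce the matrix $C^p$ of Eq.~\ref{eq:Cp:def}; by Corollary~\ref{corollary:Cp:def}, $C^p$ is symmetric (Property~1) and unitary (Property~2) and satisfies $(\Gamma_c^p(g))^\ast = (C^p)^\ast \Gamma_c^p(g) C^p$ for all $g\in G$ (Property~\ref{item:Cp:issimilaritymatrix}). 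The hypothesis that $Z^p$ is nonzero, which is needed to form $c_Z$ and hence $C^p$, is addressed separately in Section~\ref{sec:platonicsolids}; here I assume it holds.

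Next I would apply Lemma~\ref{lemma:eigenVSconeigen} with $Q=C^p$. Its hypotheses $Q^T=Q$ and $Q Q^\ast = I$ hold by Corollary~\ref{corollary:Cp:def} Properties~1 and 2, and the matrix $B^p$ of Step~3 is exactly the real representation $B$ of $C^p$. By Property~1 of Lemma~\ref{lemma:eigenVSconeigen}, $B^p$ is nonsingular with $2d_p$ real eigenvalues and $2d_p$ orthonormal eigenvectors, and the eigenvalue pairing $\lambda\leftrightarrow-\lambda$ of Property~2 together with the absence of zero eigenvalues forces exactly $d_p$ of them to be positive (indeed all equal to $+1$ by Property~4). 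Hence the matrix $V^p$ of Step~4 is well-defined, its $d_p$ columns being the orthonormal positive-eigenvalue eigenvectors $\bigl[\begin{smallmatrix} x_k \\ -y_k \end{smallmatrix}\bigr]$ of Lemma~\ref{lemma:eigenVSconeigen}.

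The key bookkeeping step is to check that the column-combining operation of Step~5 reproduces the Takagi factor $U$ of Property~\ref{item:U:def}: applying $[I_{d_p},\, i I_{d_p}]$ to the $k$th column of $V^p$ gives $x_k + i(-y_k) = x_k - i y_k = u_k$, so $S^p=[I_{d_p},\, i I_{d_p}]V^p$ is precisely the matrix $U=[u_1,\dots,u_{d_p}]$. Consequently Property~\ref{item:U:def} gives that $S^p$ is unitary and Property~6 gives $C^p = S^p (S^p)^T$, which is exactly the restriction Eq.~\ref{eq:SisfactorofC}.

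Finally I would close the loop through Lemma~\ref{lemma:SStrans}. Since $C^p$ is symmetric and unitary, Property~\ref{item:Cp:issimilaritymatrix} of Corollary~\ref{corollary:Cp:def} can be rewritten, using $(C^p)^\ast = (C^p)^{-1}$ and $C^p = S^p (S^p)^T$, as $[S^p (S^p)^T]^{-1}\Gamma_c^p(g)[S^p (S^p)^T] = (\Gamma_c^p(g))^\ast$, i.e.\ Eq.~\ref{eq:GammaSimilarToGammaConjugate}. The equivalence in Lemma~\ref{lemma:SStrans} then yields Eq.~\ref{eq:complex2real}, so $(S^p)^H \Gamma_c^p(g) S^p$ is real for every $g$; and because $\Gamma_c^p$ and $S^p$ are both unitary, these real matrices are also unitary, hence orthogonal, so the transformed irrep is real and orthonormal as claimed. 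The theorem is thus almost a corollary of the earlier lemmas, and the only genuinely delicate point is the convention-matching in the key bookkeeping step: keeping the sign of $y_k$ consistent between the eigenvector form $\bigl[\begin{smallmatrix} x_k \\ -y_k\end{smallmatrix}\bigr]$ and the definition $u_k = x_k - i y_k$, and confirming that \emph{any} orthonormal basis of the (degenerate, eigenvalue-$1$) positive eigenspace chosen in Step~4 still produces a valid factor, which is already guaranteed by Lemma~\ref{lemma:eigenVSconeigen}.
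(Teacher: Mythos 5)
Your proposal is correct and follows essentially the same route as the paper, which justifies Theorem~\ref{theorem:realirrep} simply by applying Lemma~\ref{lemma:eigenVSconeigen} to $C^p$ (via Corollary~\ref{corollary:Cp:def}) and closing the loop through the equivalence in Lemma~\ref{lemma:SStrans}. You merely make explicit the bookkeeping the paper leaves implicit --- that $[I_{d_p},\,iI_{d_p}]V^p$ equals the Takagi factor $U$, and that unitarity of the real transformed matrices gives orthogonality --- so nothing further is needed.
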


\section{Computation of real basis functions}
\label{sec:computebasisfunctions}
In this section, formulas corresponding to the four goals in
Section~\ref{sec:intro} are stated in Eqs.~\ref{eq:IfromY}--\ref{eq:PFprop}
and the computation of basis functions satisfying these formulas is then
described. Specifically, the basis functions which satisfy the four goals in Section~\ref{sec:intro}
have four indices:
$p$ indexes the unitary irreducible representation; 
$l$ indexes the subspace defined by spherical harmonics of fixed order $l$;
$n$ indexes the vector basis of $G$ that satisfies Eqn.~\ref{eq:I:rotationEntry}; and $j$ indexes the component of the vector basis.
Let $F_{p,l,n,j}$ be a basis function that transforms as the $j$th row of
the irrep matrices and $F_{p,l,n}=(F_{p,l,n,j=1},\dots,F_{p,l,n,j=d_p})^T$.
Let $Y_{l,m}(\theta,\phi)$ be the spherical harmonic of degree $l$ and
order $m$~\cite[Section~14.30,
pp.~378--379]{OlverLozierBoisvertClark2010PATCH}.
Then the goals are to obtain a set of functions such that
\begin{eqnarray}
F_{p,l,n,j}(\theta,\phi)
&=&
\sum_{m=-l}^{+l}
c_{p,l,n,j,m}
Y_{l,m}(\theta,\phi)
\label{eq:IfromY}
\\
F_{p,l,n,j}(\theta,\phi)
&\in&
\mathbb{R}
\label{eq:I:isreal}
\\
\delta_{p,\pp}
\delta_{l,\lp}
\delta_{n,\np}
\delta_{j,\jp}
&=&
\int_{\phi=0}^{2\pi}
\int_{\theta=0}^\pi
F_{p,l,n,j}(\theta,\phi)
F_{\pp,\lp,\np,\jp}(\theta,\phi)
\sin\theta\dd\theta\dd\phi
\label{eq:I:orthonormal}
\\
F_{p,l,n}(R_g^{-1}\vx/x)
&=&
(\Gamma_r^p(g))^T
F_{p,l,n}(\vx/x).
\label{eq:PFprop}
\end{eqnarray}
\par
The computation is performed by the projection method of
Ref.~\cite[p.~94]{Cornwell1984} in which various projection operators
(Definition~\ref{def:projectionoperator}) are applied to each function of a
complete basis for the space of interest.
When the various projection operators are defined using real-valued
orthonormal irrep matrices (as computed in
Section~\ref{sec:computingrealirrepmatrices}) and are applied to a
real-valued complete orthonormal basis in the subspace spanned by spherical
harmonics of degree $l$ (which has dimension $2l+1$) then the resulting
basis for the same subspace is real-valued, complete, and
orthonormal~\cite[Theorems~I and~II, pp.~92-93]{Cornwell1984}.
\par
The remainder of this section has the following organization.
First, the projection operators are defined (Definition~\ref{def:projectionoperator}). 
Second, the initial basis in the subspace is described. 
Third, the results of applying the projection operators to the basis functions are described in terms of individual functions (Lemma~\ref{lemma:projectionapplied}) and in terms of sparse matrices of order $(2l+1)\times (2l+1)$. 
Fourth, normalization is discussed (Eq.~\ref{eq:VectorBasisFunction2}). 
Fifth, because basis functions computed by this process are more than necessary (as is detailed in the later context), 
Gram-Schmidt orthogonalization is used to extract a orthonormal subset that spans the subspace defined by degree $l$.  
Finally, sixth, comments are made on the non-uniqueness of the final basis.
\par
\begin{definition}(\cite[p.~93]{Cornwell1984})
	\label{def:projectionoperator}
	The projection operators $\calP_{j,k}^p$ are defined by~
	\begin{equation}
	\calP_{j,k}^p
	=
	\frac{d_p}{\Ngroup}
	\sum_{g\in G}
	(\Gamma^p(g))_{j,k}^\ast
	P(g)
	\label{eq:projectionoperator:def}
	\end{equation}
	where 
	$P(g)$ is the abstract rotation operator as is defined in Definition \ref{def:basisfunction}.
\end{definition}
\par
The projection operator is applied to a complete set of basis functions.
One natural choice is the set of spherical harmonics~\cite[Eq.~14.30.1,
p.~378]{OlverLozierBoisvertClark2010PATCH} (denoted by
$Y_{l,m}(\theta,\phi)$, where the arguments will routinely be suppressed)
because $Y_{l,m}$ have simple rotational properties. Specifically, the $Y_{l,m}$ functions have the symmetry property
$Y_{l,-m}=(-1)^m Y_{l,m}^\ast$~\cite[Eq.~14.30.6,
p.~378]{OlverLozierBoisvertClark2010PATCH}
and the rotational property $P(R)
Y_{l,m}=\sum_{\m=-l}^{+l}D_{l,m,\m}(R)
Y_{l,\m}$,
where
$R$ is a rotation matrix, and
$D_{l,m,\m}(R)$ are the Wigner $D$ coefficients~\cite[Eq.~4.8,
p.~52]{Rose1957},
and $P(R)$ is the rotation operator $P(R)f(\vx)=f(R^{-1}\vx)$. However, except for $Y_{l,m=0}$, spherical harmonics are complex valued.
Older literature~\cite[Eq.~10.3.25, p.~1264]{MorseFeshbach1953}
used real-valued definitions, e.g.,
{\footnotesize
\begin{equation}
\check Y_{l,m}
=
\left\{
\begin{array}{ll}
\sqrt{2} \Im Y_{l,m} , & m<0 \\
Y_{l,0}, & m=0 \\
\sqrt{2} \Re Y_{l,m}, & m>0
\end{array}
\right.
\label{eq:checkY:def}
,
\end{equation}}
which retain simple rotational properties. Both $Y_{l,m}$ and $\check Y_{l,m}$ are orthonormal systems of functions.

We will apply the projection operator to $\check Y_{l,m}$ in order to get the desired basis functions that satisfy the four goals of
Section~\ref{sec:intro}, but will describe our results in terms of $Y_{l,m}$, because
much standard software is available. Standard computations based on the properties described in the previous
paragraph result in Lemma~\ref{lemma:projectionapplied}.
\begin{lemma}
	\label{lemma:projectionapplied}
	Suppose that the $p^{th}$ irrep of a group $G$ is potentially real with the
	real-valued orthogonal irrep matrices
	$\Gamma_r^p(g)\in\mathbb{R}^{d_p\times d_p}$ for all $g\in G$.
	Then, the projection operation on real spherical harmonics $\check Y_{l,m}$
	for $m\in\{-l,\dots,l\}$ and $l\in\mathbb{N}$ can be determined by
	\begin{eqnarray}
	\calP_{j,k}^p
	\check Y_{l,m}
	&=&
	\sum_{\m=-l}^{+l}
	\hat\calD_{j,k;l,m;\m}^p
	Y_{l,\m}(\theta,\phi)
	\label{eq:calP:checkY:Y}
	\end{eqnarray}
	where
	\begin{eqnarray}
	\hat\calD_{j,k;l,m;\m}^p
	&=&
	\frac{d_p}{\Ngroup}
	\sum_{g\in G}
	(\Gamma_r^p(g))_{j,k}
	\hat D_{l,m,\m}(R_g)
	\label{eq:hatcalD:def} \mbox{,~and}
	\\
	\hat D_{l,m,\m}
	&=&
	\left\{
	\begin{array}{ll}
	-
	\frac{i}{\sqrt{2}}
	\left(
	D_{l,m,\m}
	-
	(-1)^m
	D_{l,-m,\m}
	\right)
	,
	& m<0 \\
	D_{l,0,\m}
	,
	& m=0 \\
	\frac{1}{\sqrt{2}}
	\left(
	D_{l,m,\m}
	+
	(-1)^m
	D_{l,-m,\m}
	\right)
	,
	& m>0
	\end{array}
	\right.
	\label{eq:hatD:explicit}
	.
	\end{eqnarray}
\end{lemma}
\par
An alternative view of Lemma~\ref{lemma:projectionapplied} is described in
this paragraph.
Define the vectors $Y_l=(Y_{l,-l},\dots,Y_{l,+l})^T\in\mathbb{C}^{2l+1}$ and
$\check Y_l=(\check Y_{l,-l},\dots,\check Y_{l,+l})^T\in\mathbb{R}^{2l+1}$.
There exists a unitary matrix
$U_l\in\mathbb{C}^{(2l+1)\times(2l+1)}$
such that
$
\check Y_l
=
U_l^H Y_l
$
where $U_l$ has at most two non-zero entries in any row or any column.
The Wigner $D$ coefficients can be grouped into a matrix
$D_l(R)\in\mathbb{C}^{(2l+1)\times (2l+1)}$
such that
$
P(R)
Y_l
=
D_l(R)
Y_l
$
where $D_l(R)$ is typically a full matrix.
In terms of these two matrices, 
$
P(R)
\check Y_l
=
\hat D_l(R)
Y_l
$
where $\hat D_l(R)\in\mathbb{C}^{(2l+1)\times (2l+1)}$ is defined by
$\hat D_l(R)=U_l^H D_l(R)$,
The matrix equation $\hat D_l(R)=U_l^H D_l(R)$ 
is equivalent to Eq.~\ref{eq:hatD:explicit},  
but Eq.~\ref{eq:hatD:explicit} 
is less expensive to compute because of the sparseness of $U_l$.
\par
According to~\cite[p.~94]{Cornwell1984}, a vector of $d_p$ real basis functions, denoted by
$\calC_{k,l,m}^p\in\mathbb{R}^{d_p}$ and expressed in terms of
$Y_{l,m}$, can be computed from
Lemma~\ref{lemma:projectionapplied} (Eq.~\ref{eq:calP:checkY:Y}) as 
{\small
	\begin{align}\label{eq:VectorBasisFunction2}
	\calC_{k,l,m}^p(\theta,\phi)
	& =
	\frac{1}{\hat{c}^p_{k,l,m}}
	\Biggl[\begin{smallmatrix}
	\calP_{1,k}^p \check Y_{l,m}(\theta,\phi)\\
	\vdots\\
	\calP_{d_p,k}^p \check Y_{l,m}(\theta,\phi)
	\end{smallmatrix}\Biggr]=
	\hat{\boldsymbol{\calD}}_{l,m}^p Y_{l}(\theta,\phi),
	\end{align}
}
where
$(\hat{\boldsymbol{\calD}}_{l,m}^p)_{j,\m}=\hat \calD_{j,k,l,m,\m}^p/\hat{c}^p_{k,l,m}$
for $j\in\{1,\dots,d_p\}$, $\m\in\{-l,\dots,l\}$, and $\hat{c}^p_{k,l,m}=\sqrt{\sum_{\m=-l}^{l}|{\hat{\calD}}_{k,k,l,m,\m}^p|^2}$ all for some $k\in\{1,\dots,d_p\}$ such that $\hat{c}^p_{k,l,m}>0$.

Note that this procedure computes $2l+1$ coefficient matrices $\hat{\boldsymbol{\calD}}_{l,m}^p$ by varying
$m$ through the set $\{-l,\dots,+l\}$, so that a total of $(2l+1)d_p$ basis
functions are computed. This is more than necessary for a basis, because the subspace of square-integrable functions on the surface of the sphere, where the subspace is defined by degree $l\in\mathbb{N}$, is spanned by
$(2l+1)$ basis functions.
Through Gram-Schmidt orthogonalization, the set of coefficient matrices,
$\{\boldsymbol{\hat\calD}_{l,m}^p\}$ for $m\in\{-l,\dots,l\}$, shrinks to a
smaller set of coefficient matrices, $\{\boldsymbol{\hat\calH}_{l,n}^p\}$ for
$n\in\{1,\dots,\Npl<2l+1\}$.
The value of $\Npl\in\mathbb{N}$ is determined by this process.
Finally, the expression for the vector of $d_p$ orthonormal real basis
functions, $\basispln(\theta,\phi)$, is
\begin{equation}\label{eq:MatrixBasisFunction}
\basispln(\theta,\phi)
=
\boldsymbol{\hat\calH}_{l,n}^p Y_l(\theta,\phi),
\text{~for $n\in\{1,\dots,\Npl\}$.}
\end{equation} 
\par
\textcolor{black}{
Note that given the \texttt{WignerD} solutions calculated by \emph{Mathematica}~\cite{MathematicaURL}, computing the set of coefficient matrices,
$\boldsymbol{\hat\calD}_{l,m}^p$ for $m\in\{-l,\dots,l\}$, requires 
$2d_p(2l+1)[(3N_g+1)l+N_g]$ arithmetic operations. The Gram-Schmidt procedure, which shrinks the matrix of size $d_p(2l+1)\times(2l+1)$ to a matrix of size $d_pN_{p;l}\times(2l+1)$, requires $2d_p(2l+1)^3$ arithmetic operations~\cite[p.~255]{GolubVanLoan2013}. Eq.~\ref{eq:MatrixBasisFunction} for all $n$'s 
takes another $2d_pN_{p;l}l$ arithmetic operations, where $\sum_{p=1}^{\Nirrep}d_pN_{p;l}=2l+1$. 
Hence, given $l$, for all $p$'s ($p\in\{1,\dots,\Nirrep\}$) and all $n$'s ($n\in\{1,\dots,N_{p;l}\}$), it requires $2(2l+1)[(3N_g+1)l+N_g]\sum_pd_p+2(2l+1)^3\sum_pd_p+2l(2l+1)$ arithmetic operations for computing the set of basis functions $\basispln(\theta,\phi)$, and therefore the computational complexity is $\mathcal{O}((\sum_{p=1}^{\Nirrep}d_p)l^2(l+N_g))$.
 }
\par
Note that the basis is not unique.
In the approach of this paper, the nonuniqueness enters in several places,
e.g., in the choice of $A^p$ (Eq.~\ref{eq:Z:def}),
in the definition of the eigenvectors and the order of the loading of the
eigenvectors into the matrix $U$ (both Lemma~\ref{lemma:eigenVSconeigen}),
and in the creation of an orthonormal family of basis functions in the
subspace of dimension $2l+1$ which is spanned by the $2l+1$ spherical
harmonics of degree $l$.

\section{Application to the polyhedral groups}
\label{sec:application}
In this section, the theory of this paper is applied to the three polyhedral groups, which are the tetrahedral $T$, octahedral
$O$, and icosahedral $I$ groups. \textcolor{black}{Recall the fact that the tetrahedral group $T$ is the rotational symmetry group of the regular tetrahedron; the octahedral group $O$ is the rotational symmetry group of the cube and the regular octahedron; and the icosahedral group $I$ is the rotational symmetry group of the regular dodecahedron and the regular icosahedron.} Properties of each group and the parameter values which select a specific
basis are described in Section~\ref{sec:platonicsolids} and the numerical
results are presented in Section~\ref{sec:numericalresults}.
\par
\subsection{Irreps and rotation matrices of polyhedral groups} \label{sec:platonicsolids}
Unitary complex-valued irrep matrices for the tetrahedral and octahedral
groups are available at the Bilbao Crystallographic
Server~\cite{AroyoKirovCapillasPerezMatoWondratschekBILBAOActaCryst2006,TetrahedralURL,OctahedralURL}.
Unitary complex-valued irrep matrices for the icosahedral group are
provided by~\cite{LiuPingChenJMathPhys1990}.
The Frobenious-Schur indicator (Section~\ref{sec:computingrealirrepmatrices})
implies that all irreps of the
octahedral and the icosahedral groups are potentially real.
Similarly, the tetrahedral group has irreps $A$ and $T$ that are
potentially real and irreps ${}^1E$ and ${}^2E$ that are essentially
complex.
In the reminder of the paper, we refer to the tetrahedral irreps (the irreps of the tetrahedral group) $A$,
${}^1E$, ${}^2E$ and $T$ as the 1st, 2nd, 3rd and 4th irreps, respectively,
and refer to the octahedral irreps $A_1$, $A_2$, $E$, $T_1$ and $T_2$ as
the 1st, 2nd, 3rd, 4th and 5th irreps, respectively\footnote{\textcolor{black}{\{$A$,
${}^1E$, ${}^2E$, $T$\} and \{$A_1$, $A_2$, $E$, $T_1$, $T_2$\} are the Mulliken symbols used to identify irreps of group $T$ in \cite{TetrahedralURL}, and irreps of group $O$ in \cite{OctahedralURL}, respectively. $A$, $E$ and $T$ denote 1-dimensional, 2-dimensional, and 3-dimensional irrep, respectively. Note that there are two 2-dimensional irreps for the tetrahedral group, which are denoted by ${}^1E$ and ${}^2E$, respectively, in \cite{TetrahedralURL}. The irrep $A$ has symmetry with respect to rotation of the principle axis. $(\cdot)_1$ ($(\cdot)_2$) denotes the irrep which has symmetry (anti-symmetry) with respect to a vertical mirror plane perpendicular to the principal axis.
}}
The basic properties of the groups are tabulated in
Table~\ref{tab:polyhedralgp}.
\begin{table}[h]
	\begin{center}
		\begin{tabular}{|c|c|c|c|c|}
			\hline
			Symmetry Groups & $\Ngroup$ & $\Nirrep$ & $d_p$ & potentially real irreps \\
			\hline
			Tetrahedral & 12 & 4 & \{1, 1, 1, 3\} & 1,4\\
			\hline
			Octahedral & 24 & 5 & \{1, 1, 2, 3, 3\} & 1,2,3,4,5 \\
			\hline
			Icosahedral & 60 & 5 & \{1, 3, 3, 4, 5\} & 1,2,3,4,5\\
			\hline
		\end{tabular}
	\end{center}
	\caption{
		\label{tab:polyhedralgp}
		Basic properties of the polyhedral groups: the group orders
		($\Ngroup$), the number of irreps ($\Nirrep$), the dimension of the $p$th
		irrep ($d_p$ for $p\in\{1,\dots,\Nirrep\}$), and the potentially real irreps
		of each group.
	}
\end{table}
\par
For each symmetry operation, a rotation matrix
($R_g\in\mathbb{R}^{3\times 3}$ for $g\in G$ which satisfies
$R_g^T=R_g^{-1}$, $\det R_g=+1$) is needed.
The set of rotation matrices defines the relationship between the
symmetries and the coordinate system.
Any orthonormal real-valued irrep with $d_p=3$ can serve as such a set of
rotation matrices.
For the tetrahedral and octahedral groups, rotation matrices are available
at the Bilbao Crystallographic
Server~\cite{AroyoKirovCapillasPerezMatoWondratschekBILBAOActaCryst2006,TetrahedralURL,OctahedralURL}
although the matrices must be re-ordered in order to match the
multiplication tables of the irrep matrices and, after reordering, they are
the 4th irrep of the tetrahedral group and the 4th irrep of the octahedral
group.
For the icosahedral group, we desire to use the coordinate system in which
the $z$-axis passes through two opposite vertices of the icosahedron and
the $xz$ plane includes one edge of the
icosahedron~\cite{LaporteZNaturforschg1948,AltmannPCambPhilSoc1957,ZhengDoerschukTln1997}.
Rotation matrices in this coordinate system are
available~\cite{ZhengDoerschukComputersPhysics1995} although the matrices
must be reordered to match the multiplication table of the irrep
matrices~\cite{LiuPingChenJMathPhys1990}.
The reordering and the similarity matrix to match the rotation matrices to
either of the two $d_p=3$ sets of irrep matrices are given in
Appendix~\ref{sec:icosahedralthreedimensionalirreps}.
The calculations described in this paper use the rotation matrices
reordered to match the multiplication table of the 2nd irrep.
\par
For the particular irreps described above, it is necessary to give values
for the $A^p$ matrices of Lemma~\ref{lemma:Z:def}.
The identity matrix $I_{d_p}$ satisfies the nonsingular and transpose
symmetric hypotheses of Lemma~\ref{lemma:Z:def}.
However, for the $p=4$ irrep of the icosahedral group for which $d_4=4$,
$I_4$ leads to $Z^4=0$ by direct computation.
It was not difficult to find a choice for $A^p$ such that all
potentially-real irreps of the tetrahedral, octahedral, and icosahedral
groups have nonzero $Z^p$.
For instance, the choice of an ``exchange permutation''
matrix~\cite[Section~1.2.11, p.~20]{GolubVanLoan2013} for $A^p$, which is
the anti-diagonal matrix with all ones on the anti-diagonal, leads to
$Z^p=A^p$ by direct computation.
This choice for $A^p$ was used in all computations in this paper.

\subsection{Numerical results}
\label{sec:numericalresults}
For the tetrahedral group, the coefficient matrices $\boldsymbol{\hat\calH}_{l,n}^p$ for degree $l\in\{1,\dots,45\}$,
$p\in\{1,4\}$ and $n\in\{1,\dots, \Npl\}$, were computed.
The total number of rows in the coefficient matrices is
$N_{p=1;l}+N_{p=4;l}<2l+1$ for each $l$, which is in agreement with the
fact that only two of four irreps are potentially real and therefore only
two of four irreps are included in our calculation.
The resulting basis functions have been numerically verified to be real-valued and orthonormal.
\par
For the octahedral and icosahedral cases, there are numerical checks that can be performed on the basis functions because all irreps are potentially real.
Eq.~\ref{eq:IfromY} is achieved by construction.
Eq.~\ref{eq:I:isreal} is achieved by construction for $\hat\calH_{l,m}^p$.
To verify Eq.~\ref{eq:I:orthonormal}, form matrix
{\footnotesize$\boldsymbol{\hat\calH}_l=[(\boldsymbol{\hat\calH}_{l,1}^{p=1})^T,\dots,(\boldsymbol{\hat\calH}_{l,\Npl})^T, \dots, (\boldsymbol{\hat\calH}_{l,1}^{p=\Nirrep})^T,\dots, (\boldsymbol{\hat\calH}_{l,\Npl}^{p=\Nirrep})^T]^T$}. The matrix dimension is verified to be $(2l+1)\times(2l+1)$, which verifies that the correct number of basis functions have been found
($\sum_{p=1}^{\Nirrep} d_p N_l^p=2l+1$). Moreover, the matrix $\boldsymbol{\hat\calH}_l$
is verified to be unitary, which verifies that the basis functions are orthonormal.
Eq.~\ref{eq:PFprop} is verified by testing an array of $(\theta,\phi)$
values.
The verifications were carried out for $l\in\{0,\dots,45\}$.
\begin{figure}[H]
	\begin{center}
		\begin{tabular}{cc}
			\includegraphics[width=1.8cm]{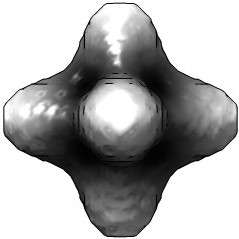}
			&
			\includegraphics[width=1.8cm]{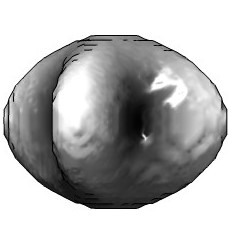}
			\\
		\end{tabular}
		\\
		(a) Tetrahedral basis functions $T_{p,l,n,j}$
		\\
		\begin{tabular}{ccccc}
			\includegraphics[width=1.8cm]{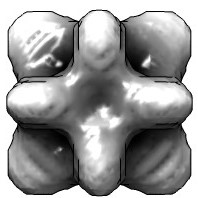}
			&
			\includegraphics[width=1.8cm]{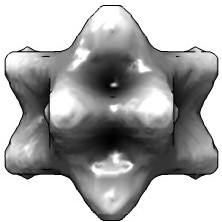}
			&
			\includegraphics[width=1.8cm]{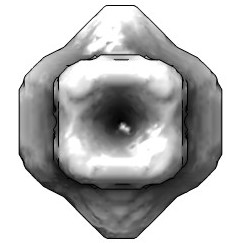}
			&
			\includegraphics[width=1.8cm]{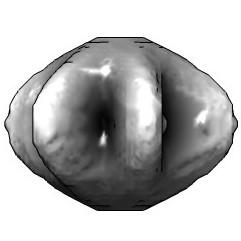}
			&
			\includegraphics[width=1.8cm]{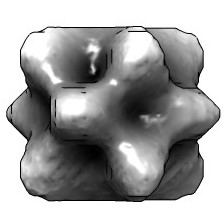}
		\end{tabular}
		\\
		(b) Octahedral basis functions $O_{p,l,n,j}$
		\\
		\begin{tabular}{ccccc}
\includegraphics[width=1.9cm]{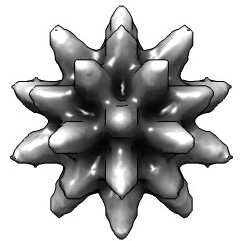}
&
\includegraphics[width=1.8cm]{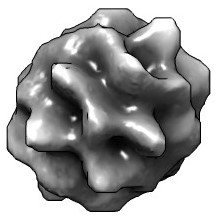}
&
\includegraphics[width=1.8cm]{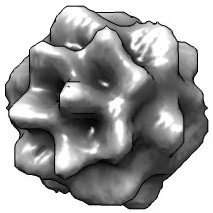}
&
\includegraphics[width=1.8cm]{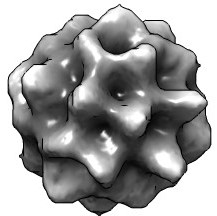}
&
\includegraphics[width=1.8cm]{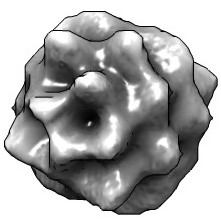}
		\end{tabular}
		\\
		(c) Icosahedral basis functions $I_{p,l,n,j}$
	\end{center}
	\caption{
		\label{fig:irredrep}
		Examples of the real basis functions of the three polyhedral
		groups.
		The surfaces of 3-D objects defined by Eq.~\ref{eq:visualizationofharmonic}
		are visualized by UCSF
		Chimera~\cite{PettersenHuangCouchGreenblattMengFerrin2004} where the darkness indicates the distance from the center of the object. The darker
		the color is, the closer the point is to the center.
	}
\end{figure}
\par
Example basis functions are shown in
Figure~\ref{fig:irredrep} by visualization of the function
\begin{equation}
\xi_{p,l,n,j}(\vx)
=
\left\{
\begin{array}{ll}
1, & x\le \kappa_1 + \kappa_2 F_{p,l,n,j}(\vx/x) \\
0, & \mbox{otherwise}
\end{array}
\right.
\label{eq:visualizationofharmonic}
\end{equation}
where $\kappa_1$ and $\kappa_2$ are chosen so that
$0.5\le
\kappa_1 + \kappa_2 I_{p,l,n,j}(\vx/x)
\le 1$. Software and numerical solution for the real irrep matrices and spherical harmonics coefficient for constructing real basis functions for theses three polyhedral groups are available here (\url{https://github.com/nxu25/PolyhedralBasisFunction}) as well as described in the Supplemental Materials.

\section{Conclusion}
Motivated by cryo electron microscopy problems in structural biology,
this paper presents a method for computing real-valued basis functions
which transform as the various rows and irreducible representations of a polyhedral group.
The method has two steps: (1)~compute real-valued orthonormal irreducible
representation matrices (Section~\ref{sec:computingrealirrepmatrices}) and
(2)~use the matrices to define projection operators which are applied to a
real-valued basis for the desired function space
(Section~\ref{sec:computebasisfunctions}).
The method is applied to the icosahedral, octahedral, and tetrahedral
groups where the second step is performed in spherical coordinates using
the spherical harmonics basis.
The most burdensome part of the calculation for the first step is the
solution of a real symmetric eigenvector problem of dimension equal to
twice the dimension of the irreducible representation matrices.
For these three groups, the largest matrix is of dimension 5 so the
calculations are straightforward.
Of the remaining polyhedral groups, basis functions for the
cyclic groups are more naturally described in cylindrical coordinates
using the complex exponential basis and possibly the same is true
for the dihedral groups and so the calculations for the second step would
be quite different from those described in this paper.
However, the calculations in the first step, which apply to any potentially
real irreducible representation, would remain relevant.
\par
The resulting basis functions are described by linear combinations of
spherical harmonics and a \emph{Mathematica}~\cite{MathematicaURL} program
to compute the coefficients of the linear combination and a
\emph{Matlab}~\cite{MatlabURL} program to evaluate the resulting basis
functions are on our \href{https://github.com/nxu25/PolyhedralBasisFunction}{github page}.

\section*{Acknowledgments}
We are very grateful for helpful discussions with Prof.\ Dan M. Barbasch
(Department of Mathematics, Cornell University) about representation theory and the method for generating real irreducible representation matrices in Section~\ref{sec:computingrealirrepmatrices}. 
In particular, the idea of formulating Eq.~\ref{eq:Z:def} (Lemma \ref{lemma:Z:def}) was contributed by Prof. Barbasch. We also thank the financial support from the National Science Foundation under grant number 1217867, and N.X. thanks the financial support from Georgia Institute of Technology under the postdoctoral fellowship.

\bibliographystyle{siamplain}
\bibliography{references}

\clearpage

\appendix
\section{Proofs of Lemmas}
\label{sec:appendixA}
\begin{proof}[Proof of Lemma~\ref{lemma:SStrans}]
	Eq.~\ref{eq:complex2real} implies Eq.~\ref{eq:GammaSimilarToGammaConjugate})
	$\Gamma_r^p$ is real by definition so that
	\[\Gamma_r^p(g)=(\Gamma_r^p(g))^\ast\].
	Since $\Gamma_r^p=(S^p)^H\Gamma_c^p(g)S^p$, it follows that
	\[\Gamma_r^p=
	(S^p)^H
	\Gamma_c^p(g)
	S^p
	=
	\left(
	(S^p)^H
	\Gamma_c^p(g)
	S^p
	\right)^\ast
	=
	(S^p)^T
	(\Gamma_c^p(g))^\ast
	(S^p)^\ast.\]
	Multiply on the left by
	$({(S^p)^H})^T=({(S^p)^{-1}})^T=({(S^p)^T})^{-1}$
	and on the right by
	$({(S^p)^H})^\ast=(S^p)^T$
	to get
	\[({(S^p)^T})^{-1}(S^p)^H\Gamma_c^p(g)S^p(S^p)^T=(\Gamma_c^p(g))^\ast,\]
	which, since $(S^p)^H=(S^p)^{-1}$, implies that
	\[[S^p(S^p)^T]^{-1}\Gamma_c^p(g)[S^p(S^p)^T]=(\Gamma_c^p(g))^\ast.\]
	Therefore $\Gamma_c^p(g)$ is similar to $(\Gamma_c^p(g))^\ast$.
	
	Eq.~\ref{eq:GammaSimilarToGammaConjugate} implies Eq.~\ref{eq:complex2real})
	Multiplying by $(S^p)^T$ on the left and $(S^p)^\ast$ on the right of
	Eq.~\ref{eq:GammaSimilarToGammaConjugate} gives
	\[(S^p)^T[S^p(S^p)^T]^{-1}\Gamma_c^p(g)[S^p(S^p)^T](S^p)^\ast=(S^p)^T(\Gamma_c^p(g))^\ast(S^p)^\ast\]
	which can be reorganized using the assumption that $S^p$ is unitary to get
	\[[(S^p)^T(S^p)^\ast](S^p)^{-1}\Gamma_c^p(g)S^p[(S^p)^T(S^p)^\ast]=(S^p)^T(\Gamma_c^p(g))^\ast(S^p)^\ast.\]
	Then, also since $S^p$ is unitary, it follows that
	\[(S^p)^{-1}\Gamma_c^p(g)S^p=\left[(S^p)^{-1}\Gamma_c^p(g)S^p\right]^\ast.\]
	Since the left and the right hand sides of the above equation are complex
	conjugates of each other, it follows that each is real, i.e.,
	$\Gamma_r^p(g)=(S^p)^{-1}\Gamma_c^p(g)S^p$ is a real-valued matrix.
\end{proof}
\par
\begin{proof}[Proof of Lemma~\ref{lemma:Z:def}]
	Property~1:
	Because the irrep is unitary, $Z^p$ can be written in the form
	\begin{equation}
	Z^p
	=
	\frac{1}{\Ngroup}
	\sum_{g\in G}
	\Gamma_c^p(g)
	A^p
	({\Gamma_c^p(g)})^T.
	\label{eq:Zp:defwithtranspose}
	\end{equation}
	Then, Property~1 follows from a direct computation.
	\par
	Properties~2 and~3:
	For any arbitrary $g^\prime\in G$, we have
	\begin{eqnarray}
	\Gamma_c^p(g^\prime)
	Z^p
	({\Gamma_c^p(g^\prime)})^T
	&=&
	\Gamma_c^p(g^\prime)
	\frac{1}{\Ngroup}
	\sum_{g\in G}
	\Gamma_c^p(g)
	A^p
	({\Gamma_c^p(g)})^T
	({\Gamma_c^p(g^\prime)})^T\nonumber
	\\
	&=&
	\frac{1}{\Ngroup}
	\sum_{g\in G}
	[
	\Gamma_c^p(g^\prime)
	\Gamma_c^p(g)
	]
	A^p
	[
	\Gamma_c^p(g^\prime)
	{\Gamma_c^p(g)})
	]^T\nonumber
	\\
	&=&
	\frac{1}{\Ngroup}
	\sum_{g\in G}
	\Gamma_c^p(g^\prime g)
	A^p
	({\Gamma_c^p(g^\prime g)})^T
	s=
	\frac{1}{\Ngroup}
	\sum_{g\in G}
	\Gamma_c^p(g)
	A^p
	({\Gamma_c^p(g)})^T
	\label{eq:UsedRearrangementTheorem}
	=
	Z^p
	\label{eq:GammaGammaTransposeEqualsZ}
	\end{eqnarray}
	where the forth equivalence follows from the Rearrangement
	Theorem~\cite[Theorem~II, p.~24]{Cornwell1984}.
	Because the irrep is unitary, rearranging Eq.~\ref{eq:GammaGammaTransposeEqualsZ} gives $\Gamma_c^p(g^\prime)
	Z^p
	=
	Z^p
	({\Gamma_c^p(g^\prime)})^\ast$.
	Because $g^\prime$ is arbitrary,
	\begin{equation}
	\Gamma_c^p(g)
	Z^p
	=
	Z^p
	({\Gamma_c^p(g)})^\ast, \text{~for all $g\in G$.}
	\label{eq:Zissimilaritymatrix}
	\end{equation}
	Property~2 follows from Ref.~\cite[Theorem~II, p.~128]{Cornwell1984} because the irrep $\Gamma_c^p$ is potentially real.
	\par
	Note that both $\Gamma_c^p$ and $(\Gamma_c^p)^\ast$ are unitary irreps of
	dimension $d_p$ of the group $G$.
	Schur's Lemma~\cite[Theorem~I, p.~80]{Cornwell1984} applied to
	Eq.~\ref{eq:Zissimilaritymatrix} implies that either $Z^p=0$ or $\det
	Z^p\neq 0$.
	Because of the assumption $Z^p\neq 0$, $Z^p$ is nonsingular.
	Therefore, $Z^p$ is a similarity transform from ${\Gamma_c^p(g)}$ to
	$({\Gamma_c^p(g)})^\ast$ for all $g\in G$ which proves Property~3.
\end{proof}
\par
\begin{proof}[Proof of Lemma~\ref{lemma:eigenVSconeigen}]
	For simplicity, let $Q_1=\Re Q$ and $Q_2=\Im Q$.
	\par
	Property~1: The matrices $Q_1$, $Q_2$, and $B$ are all real and symmetric.
	Since $B\in\mathbb{R}^{2n\times 2n}$ and $B^T=B$, $B$ has $2n$ real eigenvalues
	(possibly repeated) and $2n$ real orthonormal eigenvectors~\cite[Theorem~2.5.6, p.~104]{HornJohnson1985}.
	Define $M$ by
	\[
	M
	=
	\Bigl[
	\begin{array}{cc}
	I & -i I \\
	0 & I
	\end{array}
	\Bigr]
	B
	\Bigl[
	\begin{array}{cc}
	I & 0 \\
	iI & I
	\end{array}
	\Bigr]
	=
	\Bigl[
	\begin{array}{cc}
	0 & Q_2 + iQ_1 \\
	Q_2-iQ_1 & -Q_1
	\end{array}
	\Bigr].\]
	Then,\[
	\det(B)=\det(M)=\det((Q_2 + iQ_1)(Q_2 - iQ_1)-0(-Q_1)=\det(QQ^\ast)
	=|\det(Q)|^2>0\]
	because $Q$ is non-singular. Hence, $B$ is non-singular.
	\par
	Property~2:
	\[
	B
	\left[
	\begin{smallmatrix}
	x \\
	-y
	\end{smallmatrix}
	\right]
	=
	\lambda
	\left[
	\begin{smallmatrix}
	x \\
	-y
	\end{smallmatrix}
	\right]\]
	\[\Longleftrightarrow\begin{cases}
	Q_1 x - Q_2 y &= \lambda x \\
	Q_2 x + Q_1 y &= -\lambda y
	\end{cases}\Longleftrightarrow\begin{cases}
	Q_2 y - Q_1 x &= -\lambda x \\
	Q_2 x + Q_1 y &= -\lambda y
	\end{cases}\]
	\[\Longleftrightarrow B
	\left[
	\begin{smallmatrix}
	x \\
	y
	\end{smallmatrix}
	\right]
	=
	-\lambda
	\left[
	\begin{smallmatrix}
	x \\
	y
	\end{smallmatrix}
	\right].\]
	\par
	Property~3:
	Define the matrices
	$X=\left[x_1,\dots, x_n\right]\in\mathbb{R}^{n\times n}, Y
	=
	\left[
	y_1,\dots, y_n
	\right]
	\in\mathbb{R}^{n\times n}, 
	\Lambda
	=
	\diag(\lambda_1,\dots,\lambda_n)
	\in\mathbb{R}^{n\times n},
	U
	=
	X-iY
	\in\mathbb{C}^{n\times n}$.
	Then, the equation
	\[
	B
	\left[
	\begin{smallmatrix}
	x_k \\
	-y_k
	\end{smallmatrix}
	\right]
	=\lambda_k
	\left[
	\begin{smallmatrix}
	x_k \\
	-y_k
	\end{smallmatrix}
	\right]~\mbox{for $k\{1,\dots,n\}$}
	\]
	is equivalent to
	$B
	\left[
	\begin{array}{cc}
	X \\
	-Y
	\end{array}
	\right]
	=
	\left[
	\begin{array}{cc}
	Q_1 & Q_2 \\
	Q_2 & -Q_1
	\end{array}
	\right]
	\left[
	\begin{array}{cc}
	X \\
	-Y
	\end{array}
	\right]
	=
	\left[
	\begin{array}{cc}
	X \\
	-Y
	\end{array}
	\right]\Lambda$, 
	which is equivalent to
	$\begin{cases}
	Q_1 X - Q_2 Y &= X\Lambda \\
	Q_2 X + Q_1 Y &= -Y\Lambda
	\end{cases}.$
	\par
	Multiplying the second equation by $i$ and adding to the first equation
	gives
	\begin{align}
	U\Lambda&=(X-iY)\Lambda=(Q_1 X - Q_2 Y)+i(Q_2 X + Q_1 Y)\nonumber\\
	&=(Q_1 + iQ_2)X+(iQ_1 - Q_2)Y=(Q_1+iQ_2)X+(Q_1+iQ_2)iY\nonumber\\
	&=(Q_1+iQ_2)(X+iY)=Q U^\ast.\nonumber
	\end{align}
	Therefore $x_k-iy_k$ and $+\lambda_k$ are the coneigenvectors and
	coneigenvalues of $Q$, respectively.
	\par
	Property~4:
	Because $Q Q^\ast=I_n$ by assumption, the eigenvalues of $Q Q^\ast$
	are the eigenvalues of $I_n$ which all have value 1.
	By Ref.~\cite[Proposition~4.6.6, p.~246]{HornJohnson1985}, $\xi$ is an
	eigenvalue of $Q Q^\ast$ if and only if $+\sqrt{\xi}$ is a
	coneigenvalue of $Q$. Therefore, all the coneigenvalues of $Q$ have value 1.
	\par
	Property~5:
	Let the columns of $V\in\mathbb{R}^{2n\times 2n}$ be the $2n$ real orthonormal
	eigenvectors of $B$, i.e.,
	\begin{equation}
	V
	=
	\left[
	\left[
	\begin{array}{c}
	x_1 \\
	-y_1
	\end{array}
	\right]
	,
	\dots
	,
	\left[
	\begin{array}{c}
	x_n \\
	-y_n
	\end{array}
	\right]
	,
	\left[
	\begin{array}{c}
	y_1 \\
	x_1
	\end{array}
	\right]
	,
	\dots
	,
	\left[
	\begin{array}{c}
	y_n \\
	x_n
	\end{array}
	\right]
	\right]
	.
	\end{equation}
	Then, $V^T V = V V^T=I_{2n}$ and $V^H V = V V^H=I_{2n}$.
	\par
	Define $L\in\mathbb{C}^{n\times 2n}$ by $L=[I_n , iI_n]$ and $\tilde
	U\in\mathbb{C}^{2\times 2n}$ by $\tilde U=LV$.
	Then
	$\tilde U \tilde U^H = (LV)(LV)^H = L V V^H L^H = L I_{2n} L^H = L L^H = I_n+I_n=2I_n$.
	But also,
	$\tilde U
	=
	LV
	=
	[x_1-iy_1,\dots,x_n-iy_n, y_1+ix_1,\dots,y_n+ix_n]
	=
	[x_1-iy_1,\dots,x_n-iy_n, i(x_1-iy_1),\dots,i(x_n-iy_n)]
	=
	[U, iU]$
	and
	$\tilde U \tilde U^H
	=
	[U, iU] \left[\begin{array}{c} U^H \\ -iU^H \end{array}\right]
	=
	U U^H + U U^H
	=
	2U U^H$.
	Therefore, $U U^H=I_n$.
	\par
	Property~6:
	Property~6 follows immediately from Properties~3--5 since
	Property~3 states that $QU^\ast=U\Lambda$, Property~4 states that
	$\Lambda=I_n$, and Property~5 states that $({U^\ast})^{-1}=U^T$.
\end{proof}
\section{Relationships between icosahedral $d_p=3$ irreps}
\label{sec:icosahedralthreedimensionalirreps}
Let $R_g$ be the rotation matrices of
Ref.~\cite{ZhengDoerschukComputersPhysics1995} which are also a real
orthonormal irrep of dimension 3.
Let $\Gamma^p(g)$ be the complex unitary irreps of
Ref.~\cite{LiuPingChenJMathPhys1990} where $p=2$ and $p=3$ are of dimension
3.
With different permutations, $R_g$ can be made similar to both
$\Gamma^{p=2}(g)$ and $\Gamma^{p=3}(g)$.
In particular,
$\Gamma^p(g)=(S^p)^H R_{\gamma^p(g)} S^p$ for $p\in\{2,3\}$ where the
permutation $\gamma^p(g)$ and the complex unitary matrices
$S^p\in\mathbb{C}^{3\times 3}$ are given in Table~\ref{table:permutations}
and Eq.~\ref{eq:Uforlinkingirreps}, respectively.
\begin{equation}
S^{p=2}
=
\left[
\begin{array}{ccc}
-1/\sqrt{2} & 0 & -1/\sqrt{2} \\
-i/\sqrt{2} & 0 & i/\sqrt{2} \\
0 & 1 & 0
\end{array}
\right]
\quad
S^{p=3}
=
\left[
\begin{array}{ccc}
-1/\sqrt{2} & 0 & -1/\sqrt{2} \\
i/\sqrt{2} & 0 & -i/\sqrt{2} \\
0 & 1 & 0
\end{array}
\right]
\label{eq:Uforlinkingirreps}
.
\end{equation}
\begin{table}[H]
	\begin{center}
		\begin{tabular}{l|llllllllllllllllllll}
			$g$ & 1 & 2 & 3 & 4 & 5 & 6 & 7 & 8 & 9 & 10 & 11 & 12 & 13 & 14 & 15  \\
			$\gamma^2(g)$ & 1 & 2 & 5 & 9 & 17 & 10 & 27 & 13 & 21 & 18 &24 & 15 & 26 & 3 & 4 \\
			$\gamma^3(g)$ & 1 & 4 & 3 & 36 & 52 & 38 & 42 & 49 & 60 & 54 & 56 & 48 & 45 & 2 & 5 \\
			\hline
			$g$ & 16 & 17 & 18 & 19 & 20 & 21 & 22 & 23 & 24 & 25 & 26 & 27 & 28 & 29 & 30 \\
			$\gamma^2(g)$ & 48 & 45 & 56 & 54 & 49 &60 & 36 & 52 & 42 & 38 & 14 & 16 & 47 & 40 & 46 \\
			$\gamma^3(g)$ & 24 & 18 & 15 & 26 & 21 & 13 & 10 & 27 & 17 & 9 & 46 & 55 & 22 & 8 & 25 \\
			\hline
			$g$ & 31 & 32 & 33 & 34 & 35 & 36 & 37 & 38 & 39 & 40 & 41 & 42 & 43 & 44 & 45\\
			$\gamma^2(g)$ &55 & 41 & 53 & 20 & 29 & 6 & 12 & 57 & 39 & 8 &22 & 44 & 58 & 28 & 25\\
			$\gamma^3(g)$ & 28 & 20 & 29 & 53 & 41 & 40 & 47 & 12 & 6 & 39 & 57 & 16 & 14 & 44 & 58 \\
			\hline
			$g$  & 46 & 47 & 48 & 49 & 50 & 51 & 52 & 53 & 54 & 55 & 56 & 57 & 58 & 59 & 60 \\
			$\gamma^2(g)$  & 11 & 31 & 59 & 33 & 30 &19 & 43 & 35 & 34 & 37 & 23 & 7 & 50 & 32 & 51\\
			$\gamma^3(g)$  & 50 & 31 & 11 & 32 & 43 & 51 & 19 & 33 & 35 & 7 & 59 & 37 & 23 & 34 & 30\\
		\end{tabular}
	\end{center}
	\caption{
		\label{table:permutations}
		Permutations relating the 3 dimensional icosahedral irreps of Refs.~\cite{ZhengDoerschukComputersPhysics1995,LiuPingChenJMathPhys1990}.
	}
\end{table}

\clearpage
\setcounter{page}{1} 

\noindent\textbf{Supplemental Materials:}
All files described in Supplemental Materials are available at \href{https://github.com/nxu25/PolyhedralBasisFunction}{https://github.com/nxu25/PolyhedralBasisFunction}.
\section*{I. Software for computing the real irrep matrices and real basis functions}
Software packages in \emph{Mathematica} were developed for computing the real irrep matrices as well as the spherical harmonics coefficients $c_{p,l,n,j,m}$ (Eq. 6.1) which define the real basis functions in terms of spherical harmonics for the three polyhedral groups. Specific software programs and functions for each group are listed in Table SM1. Finally, the real basis functions can be obtained by multiplying each row of $\boldsymbol{\hat\calH}_{l}^{p}$ by the spherical harmonics vector (i.e., \texttt{Table[SphericalHarmonicY[l,m,$\theta$,$\phi$],\{m,-l,l\}]} in \emph{Mathematica}). Please see the notebook file ``\textsf{\url{Main.nb}}'' for the tutorial of calling these packages to generate real basis function for each polyhedral group. 
\begin{table}[h!]
	\sffamily
	\centering
	\renewcommand\thetable{SM1} 
	\begin{tabularx}{\textwidth}{|X|X|X|X|}
		\hline
		Functions $\backslash$  Group & $T$ & $O$ & $I$ \\\hline 
	Software package
		& \small{\url{RealIrrepBasisT.m}}
		& \small{\url{RealIrrepBasisO.m}}
		& \small{\url{RealIrrepBasisI.m}}
		\\\hline 
		Irrep matrix
		&  $\mathtt{\Gamma t}$0[\emph{p\underline{~},g\underline{~}}]\small{$\in\mathbb{C}^{d_p\times d_p}$} \newline
		$\mathtt{\Gamma t}$[\emph{p\underline{~},g\underline{~}}] \small{$\in\mathbb{R}^{d_p\times d_p}$}
		& $\mathtt{\Gamma o}$0[\emph{p\underline{~},g\underline{~}}]\small{$\in\mathbb{C}^{d_p\times d_p}$}\newline 
		$\mathtt{\Gamma o}$[\emph{p\underline{~},g\underline{~}}]\small{$\in\mathbb{R}^{d_p\times d_p}$} 
		& $\mathtt{\Gamma}$0[\emph{p\underline{~},g\underline{~}}]\small{$\in\mathbb{C}^{d_p\times d_p}$} \newline
		$\mathtt{\Gamma r}$[\emph{p\underline{~},g\underline{~}}] \small{$\in\mathbb{R}^{d_p\times d_p}$} \\\hline
		\footnotesize{Non-orthogonalized coefficients $\boldsymbol{\hat\calD}_{l,m}^{p}$\newline (Eq.~6.10)}
		&\footnotesize{BasisRealFunctionCo-\newline
			effMatrixT[\emph{l\underline{~},m\underline{~},p\underline{~}}]}
		&\footnotesize{BasisRealFunctionCo- \newline
			effMatrixO[\emph{l\underline{~},m\underline{~},p\underline{~}}]}
		&\footnotesize{BasisRealFunctionCo- \newline
			effMatrixI[\emph{l\underline{~},m\underline{~},p\underline{~}}]}\\\hline
		\footnotesize{Matrix of coefficients  $\boldsymbol{\hat\calH}_l^p=
			\Bigg[\begin{smallmatrix}
			\boldsymbol{\hat\calH}_{l,n=1}^{p}\\
			\vdots\\
			\boldsymbol{\hat\calH}_{l,n=\Npl}^{p}
			\end{smallmatrix}\Bigg]$}\newline (Eq.~6.11)
		&\footnotesize{BasisRealFunctionOt-\newline
			hoCoeffMatrixT[\emph{l\underline{~},p\underline{~}}]}
		&\footnotesize{BasisRealFunctionOt-\newline
			hoCoeffMatrixO[\emph{l\underline{~},p\underline{~}}]}
		&\footnotesize{BasisRealFunctionOt-\newline
			hoCoeffMatrixI[\emph{l\underline{~},p\underline{~}}]}
		\\\hline
	\end{tabularx}
	\caption{\emph{Mathematica} software packages and functions for computing real irrep matrices and coefficient matrices of real basis functions for each group. Values \emph{\textsf{p}} $\in\{1,\dots,\Nirrep\}$, \emph{\textsf{g}} $\in\{1,\dots,N_g\}$, \emph{\textsf{m}} $\in\{-l,\dots,l\}$, \emph{\textsf{precisionN, l}} $\in\mathbb{N}$, $\theta\in[0,\pi]$, and $\phi\in[0,2\pi)$. Note that for the group $T$, only the p = 1 and p = 4 irreps are real valued and lead to real-valued functions.}\vspace{-.7cm}
\end{table}
\section*{II. Numerical Solutions}
Note that the solution of real irrep matrices and coefficients are not unique as described in Section 6. One solution for each group is included as is tabulated in Table SM2.
\begin{table}[h!]  
	\sffamily
	\centering
	\renewcommand\thetable{SM2} 
	\begin{tabularx}{\textwidth}{|X|l|l|l|X|}
		\hline
		Results & $T$ & $O$ & $I$ &Format\\\hline 
		{Real irrep matrices}
		& \small{RealIrreps\underline{~}T.txt}
		& \small{RealIrreps\underline{~}O.txt}
		& \small{RealIrreps\underline{~}I.txt}
		& \\\hline
		{\footnotesize$\boldsymbol{\hat\calH}_l^p$ \newline $0\leq l\leq100$}  
		& \small{BasisCoeff\underline{~}T.txt}
		& \small{BasisCoeff\underline{~}O.txt}
		& \small{BasisCoeff\underline{~}I.txt}
		& \footnotesize{a line of l value, a line of p value, and then a line of matrix $\boldsymbol{\hat\calH}_l^p$}
		\\\hline
		{\footnotesize$\boldsymbol{F}_{l}^p$ at random $(\theta,\phi)$'s\newline  $0\leq l\leq100$}
		& \footnotesize{RealBasisTest\underline{~}T.txt}
		& \footnotesize{RealBasisTest\underline{~}O.txt}
		& \footnotesize{RealBasisTest\underline{~}I.txt}
		& \footnotesize{a line of l value, a line of p value, and then a line of ``\{$\theta$,$\phi$\} $\boldsymbol{F}_{l}^p(\theta,\phi)$''}
		\\\hline
	\end{tabularx}
	\caption{Numerical results of real irrep matrices and coefficient matrices of real basis functions. In all these files, a matrix $\big(\begin{smallmatrix}
			a & b\\
			c & d
			\end{smallmatrix}\big)$ is in the form of \{\{a,b\},\{c,d\}\}.} \vspace{-.5cm}
\end{table}
\setlength\headheight{45pt}
\section*{III. Obtain real basis functions in \emph{Matlab}} \emph{Matlab} functions are also developed to read the coefficients file (i.e., \url{read_coefMat.m} for ``BasisCoeff\underline{~}*.txt”) and then to compute the real basis functions (i.e., \url{demonstrate_get_Fplnj.m} and \url{get_Fplnj.m}). 

\end{document}